\documentclass[10pt, english]{amsart}

\usepackage{amsmath,amssymb,enumerate}

\usepackage[T1]{fontenc}
\usepackage[all]{xy}

\usepackage{babel}
\usepackage{amstext}
\usepackage{amsmath}
\usepackage{amsfonts}
\usepackage{latexsym}
\usepackage{ifthen}

\usepackage{xypic}
\xyoption{all}
\pagestyle{plain}

\newtheorem{lemma1}{}[section]

\newenvironment{lemma}{\begin{lemma1}{\bf Lemma.}}{\end{lemma1}}

\newenvironment{theorem}{\begin{lemma1}{\bf Theorem.}}{\end{lemma1}}
\newenvironment{proposition}{\begin{lemma1}{\bf Proposition.}}{\end{lemma1}}
\newenvironment{corollary}{\begin{lemma1}{\bf Corollary.}}{\end{lemma1}}
\newenvironment{remark}{\begin{lemma1}{\bf Remark.}\rm}{\end{lemma1}}

\newenvironment{notation}{\begin{lemma1}{\bf Notation.}}{\end{lemma1}}

\newenvironment{remark*}{{\bf Remark.}}{}
\newenvironment{example*}{{\bf Example.}}{}
\newenvironment{assumption*}{{\bf Assumption.}}{}

\makeatletter
\ifnum\@ptsize=0 \addtolength{\hoffset}{-0.3cm} \fi \ifnum\@ptsize=2 \addtolength{\hoffset}{0.5cm} \fi \sloppy

%script letters
\newcommand\sA{{\mathcal A}}

\newcommand\sI{{\mathcal I}}

\newcommand\sL{{\mathcal L}}

\newcommand\sO{{\mathcal O}}

\newcommand\sM{{\mathcal M}}

\setcounter{tocdepth}{2}

 \setlength{\parindent}{0pt}
\setlength{\parskip}{\smallskipamount}

\title{The algebraic dimension of compact complex threefolds with vanishing second Betti number} 
\date{\today}

%\subjclass[2000]{32J27, 14E30, 14J30, 32J17, 32J25}
%\keywords{MMP, rational curves, Zariski decomposition, K\"ahler manifolds, abundance}

\author{Fr\'ed\'eric Campana}
\author{Jean-Pierre Demailly}
\author{Thomas Peternell}

\address{Fr\'ed\'eric Campana, Institut Elie Cartan,
Universit\'e Henri Poincar\'e,
BP 239,
F-54506. Vandoeuvre-les-Nancy C\'edex,
France\\
}
\email{frederic.campana@univ-lorraine.fr}

\address{Jean-Pierre Demailly, Universit\'ee de Grenoble-Alpes, Institut Fourier, UMR 5582 du C.N.R.S., 100 rue des Maths, 38610 Gi\`eres, France 
}
\email{ jean-pierre.demailly@univ-grenoble-alpes.fr
}

\address{Thomas Peternell, Mathematisches Institut, Universit\"at Bayreuth, 95440 Bayreuth, 
Germany}

\email{thomas.peternell@uni-bayreuth.de}

\begin{document}

\maketitle

%\tableofcontents

\section{Introduction}

The paper \cite{CDP98} studied compact complex threefolds $X$ such that the second Betti number $b_2(X) = 0.$ The main result is based on Lemma 1.5, which happens to be incorrect in general (but might still hold in the context of the paper). 
In any case, some of the statements and proofs need to be adapted to fill the possible gaps; this is done in the present Corrigendum, with special regards 
 to potential complex structures on the $6$-sphere.

 \section{Statement of the results } 
 
We prove Theorem 2.1 in \cite{CDP98} in full generality in case $X$ has a meromorphic non-holomorphic map $X \dasharrow \mathbb P_1.$ In the remaining
case, $X$ has algebraic dimension $1$ and the algebraic reduction $f: X \to C$ is holomorphic.  In this case we prove that $c_3(X) \leq 0$; 
for simplicity, we will assume not only that $b_2(X) = 0$ but slightly stronger that $H^2(X,\mathbb Z) = 0$ and moreover that 
$H^1(X,\mathbb Z) = 0$, hence $C \simeq \mathbb P_1$. This suffices to treat the main application of 
complex structures on $S^6$. 

In summary, we shall prove 

\begin{theorem} \label{MT1}
Let $X$ be a $3-$dimensional compact complex manifold with $b_2(X) = 0$. Assume that there exists a non-holomorphic meromorphic non-constant map
$g: X \dasharrow \mathbb P_1$. 
Let $B$  be a holomorphic vector bundle on $X$. Then 
\begin{enumerate}
\item $H^{i}(X, B \otimes \sM) = 0 $ for $i \geq 0$ and $\sM \in {\rm Pic}^{\circ}(X) $ generic.
\item $\chi(X, B \otimes \sM) = 0 $ for all $\sM \in {\rm Pic}^{\circ}(X) $ 
\item $c_3(X) = 0$; i.e., either $b_1(X) = 0$ and $b_3(X) = 2$, or $b_1(X) = 1$ and \hbox{$b_3(X) = 0$}.
\end{enumerate}
\end{theorem}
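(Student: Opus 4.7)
The overall strategy is to prove (1) directly, and then deduce (2) and (3) from (1) by formal Hirzebruch--Riemann--Roch manipulations.

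For (1)$\Rightarrow$(2): since $b_2(X)=0$ forces $c_1(\sM)=0$ in $H^2(X,\mathbb{Q})$, we have $\mathrm{ch}(\sM)=1$ for every $\sM\in{\rm Pic}^{\circ}(X)$, and Hirzebruch--Riemann--Roch (which holds for arbitrary compact complex manifolds) gives $\chi(X,B\otimes\sM)=\chi(X,B)$ independent of $\sM$; applying (1) to a generic $\sM$ forces this constant to vanish. For (2)$\Rightarrow$(3): take $B=\Omega^p_X$ and $\sM=\O_X$ in (2) to get $\chi(X,\Omega^p_X)=0$ for $p=0,\ldots,3$, and use Hirzebruch's Chern-root identity $\sum_p(-1)^p\mathrm{ch}(\Omega^p_X)\,\mathrm{Td}(X)=c_n(T_X)$ to conclude $c_3(X)=\sum_p(-1)^p\chi(X,\Omega^p_X)=0$. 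Combined with $b_2=b_4=0$, $b_0=b_6=1$, and $b_1=b_5$ from Poincar\'e duality, the topological identity $c_3(X)=\chi_{\mathrm{top}}(X)=2-2b_1-b_3$ forces $(b_1,b_3)\in\{(0,2),(1,0)\}$.

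The main work is in proving (1). Resolve the indeterminacies of $g$ by a sequence of blow-ups $\pi\colon\hat X\to X$ so that $\hat g:=g\circ\pi\colon\hat X\to\mathbb{P}_1$ is holomorphic; let $E_1,\ldots,E_r$ be the exceptional divisors of $\pi$ and $F$ a general smooth fiber of $\hat g$ (a compact complex surface). Since $R\pi_*\O_{\hat X}=\O_X$, the projection formula gives $H^i(X,B\otimes\sM)\simeq H^i(\hat X,\pi^*(B\otimes\sM))$, so it suffices to produce the vanishing on $\hat X$. The non-holomorphy of $g$ forces at least one $E_j$ to be \emph{horizontal} for $\hat g$ (i.e.\ with $\hat g(E_j)=\mathbb{P}_1$): indeed, if every $E_j$ were contracted to a point by $\hat g$, then the pencil of fibers of $g$ on $X$ would be base-point free and $g$ would extend holomorphically. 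Using the resulting identity
\[
\hat g^*\O_{\mathbb{P}_1}(1)=\O_{\hat X}(F)=\pi^*\O_X(D)\otimes\O_{\hat X}\bigl(-\sum_j m_j E_j\bigr),
\]
the Leray spectral sequence for $\hat g$, and base change on the smooth fibers, one reduces (1) to a generic vanishing statement on $F$: that for generic $\sM\in{\rm Pic}^{\circ}(X)$, the restricted line bundle $\pi^*\sM|_F$ is sufficiently generic in $\mathrm{Pic}(F)$ to guarantee $H^i(F,\pi^*(B\otimes\sM)|_F)=0$ for all $i$.

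The main obstacle is this last genericity step. For non-K\"ahler $X$ with $b_2=0$, ${\rm Pic}^{\circ}(X)$ may be non-Hausdorff and the restriction map ${\rm Pic}^{\circ}(X)\to\mathrm{Pic}(F)$ can behave subtly, so one must verify---crucially using the horizontal exceptional divisor produced by the non-holomorphy of $g$---that this image is large enough to move $\pi^*\sM|_F$ outside every cohomology-jumping locus for $B|_F$ on $F$. This is where the main technical effort of the proof lies.
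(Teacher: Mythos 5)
Your reductions (a)$\Rightarrow$(b)$\Rightarrow$(c) are correct and essentially the paper's (the paper applies Riemann--Roch to $B=T_X$ rather than to all $\Omega^p_X$, but both are the same formal computation using $c_1=c_2=0$ in rational cohomology). Your setup for (a) also matches the paper's opening moves: resolve the indeterminacies $\sigma\colon\hat X\to X$, observe that non-holomorphy of $g$ forces an exceptional divisor $E$ that dominates $\mathbb P_1$, write $f^*\sO_{\mathbb P_1}(1)=\sigma^*\sL\otimes\sO_{\hat X}(-E)$, and run the Leray spectral sequence for the induced fibration. (The paper also first uses Serre duality to reduce to $i=0,2$, disposing of $i=0$ by the existence of effective divisors on $X$.) But your plan stops exactly where the proof begins: you defer the key vanishing to a ``genericity step'' and acknowledge you have not carried it out. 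That is a genuine gap, not a detail.

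Moreover, the route you sketch for filling it is not the right mechanism and would likely fail. You propose to show that for generic $\sM\in{\rm Pic}^{\circ}(X)$ the restriction $\pi^*\sM|_F$ avoids every cohomology-jumping locus in $\mathrm{Pic}(F)$. But ${\rm Pic}^{\circ}(X)=H^1(X,\sO_X)/H^1(X,\mathbb Z)$ may be very small (even a point if $h^1(X,\sO_X)=0$), and nothing forces its image in $\mathrm{Pic}(F)$ to be large enough to escape the jumping loci of $B|_F$; genericity in ${\rm Pic}^{\circ}(X)$ buys you nothing a priori. The paper's actual argument does not move $\sM$ continuously at all: it achieves $R^2f_*\big(\sigma^*(B\otimes\sM\otimes\sL^k)\big)=0$ by \emph{explicit effective twists}. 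By Serre duality on a fiber, $H^2(\hat X_c,\cdot)$ is dual to $H^0(\hat X_c,\sigma^*B^*\otimes\sO(-kE)\otimes\omega_{\hat X_c})$, and since the horizontal divisor $E$ meets every fiber, taking $k\gg0$ kills all sections on fiber components meeting $E$. The fiber components \emph{not} meeting $E$ (which occur only over a finite set $C^*$) are the real difficulty: the paper organizes them into chains $F_0,F_1,\dots,F_r$ according to their distance from $E$ and inductively twists by line bundles $\sO_X(\sum m_iS_i)$ supported on those components, with carefully chosen multiplicities, to kill the remaining sections. This treatment of singular and reducible fibers --- entirely absent from your proposal, which only invokes base change on smooth fibers --- is where most of the work lies; the remaining term $H^1(C,R^1f_*(\cdot)\otimes\sO_C(tA))$ is then handled by Serre vanishing on $C$.
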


Another proof has been given in \cite{LSS18}. 

Theorem \ref{MT1} takes care of all threefolds $X$ with $1 \leq a(X) \leq 2$ and $b_2(X) = 0$ except of those whose algebraic reduction $f: X \to C$ is holomorphic onto a curve $C$. 
In this case the general fiber has Kodaira dimension $\kappa (X_c) \leq 0$.

By topological considerations and surface classification, the general fiber of $f$ is either a torus, a primary Kodaira surface or a surface of type VII; in the latter case it is actually a primary Hopf surface or an Inoue surface, by
Lemma \ref{lemirrdcomp} and Lemma \ref{lem:fibers}. 
The case that the general fiber is a Kodaira surface is ruled out in Corollary \ref{cor:Kodaira}.
Then we show

\begin{theorem} \label{MT2}
Let $X$ be a $3-$dimensional compact complex manifold with $H^1(X,\mathbb Z) = H^2(X,\mathbb Z) = 0$ and algebraic dimension $a(X) = 1$. Assume that
the algebraic reduction $f: X \to C $ is holomorphic.
Then
\begin{enumerate}
\item $H^{i}(X, T_X \otimes \sM) = 0 $ for $i \ne1$.
\item $\chi(X, T_X \otimes \sM) \leq 0 $.
\item $c_3(X) \leq  0$. 
\end{enumerate}
\end{theorem}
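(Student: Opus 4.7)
The plan is to derive (c) from (b), (b) from (a), and then to establish (a) by a Leray analysis of the algebraic reduction.

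\smallskip

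\textbf{Riemann--Roch reduction.} Since $H^2(X, \mathbb Z) = 0$, every first Chern class vanishes: $c_1(X) = 0$ and $c_1(\mathcal M) = 0$ for every $\mathcal M \in \operatorname{Pic}(X) = \operatorname{Pic}^0(X)$. When one expands
\[
\chi(X, T_X \otimes \mathcal M) = \int_X \operatorname{ch}(T_X \otimes \mathcal M)\cdot\operatorname{td}(X)
\]
in top degree, every term involving $c_1$ drops out and $c_2^2$ has degree $> 6$, so only $\operatorname{ch}_3(T_X) = c_3(X)/2$ survives. Hence
\[
\chi(X, T_X \otimes \mathcal M) = c_3(X)/2.
\]
Thus (a) implies (b) (as then $\chi = -h^1 \leq 0$), and together they imply (c). The heart of the matter is the vanishing statement (a).

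\smallskip

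\textbf{Leray analysis.} Because $\mathbb P_1$ has cohomological dimension $1$, the Leray spectral sequence for $f$ degenerates at $E_2$ and gives
\[
H^i(X, T_X \otimes \mathcal M) = H^0(\mathbb P_1, R^i f_*(T_X \otimes \mathcal M)) \oplus H^1(\mathbb P_1, R^{i-1} f_*(T_X \otimes \mathcal M)).
\]
The relative tangent sequence $0 \to T_{X/C} \to T_X \to f^* T_C \to 0$, restricted to a smooth fiber $F = X_c$, reads
\[
0 \to T_F \otimes \mathcal M|_F \to (T_X \otimes \mathcal M)|_F \to \mathcal M|_F \to 0.
\]
By the classification recalled in the excerpt (via Lemma \ref{lemirrdcomp}, Lemma \ref{lem:fibers}, and Corollary \ref{cor:Kodaira}), a general fiber $F$ is a complex $2$-torus, a primary Hopf surface, or an Inoue surface. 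In each case $T_F$ is either trivial or a successive extension of flat line bundles, and $\operatorname{Pic}^0(F)$ is positive-dimensional with the standard vanishing $H^i(F, L) = 0$ for generic $L \in \operatorname{Pic}^0(F)$. For generic $\mathcal M \in \operatorname{Pic}^0(X) = H^1(X, \mathcal O_X)$, the restriction $\mathcal M|_F$ is a generic element of $\operatorname{Pic}^0(F)$, which forces $H^i(F, (T_X \otimes \mathcal M)|_F) = 0$ for every $i$. Cohomology and base change then show that each $R^i f_*(T_X \otimes \mathcal M)$ is a torsion sheaf on $\mathbb P_1$.

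\smallskip

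\textbf{Main obstacle.} A torsion sheaf on $\mathbb P_1$ has no $H^1$, which immediately yields $H^3(X, T_X \otimes \mathcal M) = 0$ and reduces the rest of (a) to killing the $H^0$ of $f_*(T_X \otimes \mathcal M)$ and of $R^2 f_*(T_X \otimes \mathcal M)$. This is the delicate step: torsion direct images can carry nonzero global sections concentrated at singular or multiple fibers of $f$. I expect to dispose of them by letting $\mathcal M$ vary in the positive-dimensional family $\operatorname{Pic}^0(X)$ and invoking upper semicontinuity: a persistent section would have to survive for every $\mathcal M$, contradicting the generic fiberwise vanishing, because the torsion support must shift as $\mathcal M$ moves. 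A case-by-case treatment of the non-smooth fibers (multiple tori, degenerate Hopf, Inoue boundary behavior) is the most delicate ingredient; here the hypotheses $H^1(X, \mathbb Z) = H^2(X, \mathbb Z) = 0$ are essential, as they force $C \simeq \mathbb P_1$ and make every line bundle on $X$ flat so that restriction to a fiber is controlled.
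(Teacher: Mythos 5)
Your reduction of (b) and (c) to (a) via Riemann--Roch, and the Leray setup reducing (a) to the vanishing of the direct images of $T_X\otimes\sM$ along $f$, match the paper's strategy (Proposition \ref{prop:CDP}). But the argument stops exactly where the real work begins, and the mechanism you propose for the hard step does not work. First, the assertion that for generic $\sM\in{\rm Pic}^{\circ}(X)$ the restriction $\sM|_F$ is a generic element of ${\rm Pic}^{\circ}(F)$ is not automatic: the restriction map ${\rm Pic}(X)\to{\rm Pic}^{\circ}(F)$ could a priori have torsion image or be far from surjective. Proving that the restriction of a general $\sL$ to \emph{every} fiber is non-torsion is a substantial part of the paper (Corollary \ref{cor:restrict}, Propositions \ref{prop:torsion1} and \ref{prop:torsion2}; for Hopf and Inoue fibers it rests on identifying $H^0(C,R^1f_*(\C^*))\simeq\C^*$). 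Moreover, for a Hopf surface $F$ the tangent bundle is \emph{not} a successive extension of flat line bundles: the relevant sequence is $0\to\sO_F(C)\to T_F\to\sO_F(-C)\otimes\omega_F^{-1}\to 0$ with $C$ the zero curve of a vector field, so the generic vanishing $H^i(F,T_F\otimes L)=0$ requires Proposition \ref{prop:easy} combined with $H^0(F,\sL|_F)=0$ (Corollary \ref{cor:hopf-inoue}), not a standard flat-line-bundle vanishing.

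Second, and more seriously, your treatment of the degenerate fibers rests on a false heuristic. The support of the torsion sheaf $R^2f_*(T_X\otimes\sM)$ is contained in the \emph{fixed} discriminant locus of $f$; it does not ``shift as $\sM$ moves.'' Semicontinuity in $\sM$ only says that if $H^2(X_c,(T_X\otimes\sM_0)|_{X_c})=0$ for one $\sM_0$ then the same holds for generic $\sM$; you must still exhibit such an $\sM_0$ for each singular, multiple, or reducible fiber, and nothing in your proposal does this. That is precisely the content of the paper's Sections 4--6: one argues by contradiction from a one-dimensional family $\sL_t$ with $H^0(S,\tilde\Omega^1_S\otimes{\sL_t}|_S)\ne 0$, passes to the normalization and a minimal model $S_0$ of the reduced fiber $S$, and excludes every possibility ($S_0$ ruled over a curve of genus at least two, using the conductor, Lemma \ref{lem:Mori}, and the theory of ruled surfaces; $S_0$ of type VII, a Kodaira surface, or of Kodaira dimension one, using vector fields from Corollary \ref{cor:VF2} and surface classification), with a separate argument for reducible fibers. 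Without this case analysis, or a genuine substitute for it, the vanishing of $H^0(\PP_1,R^2f_*(T_X\otimes\sM))$ --- hence assertion (a) and with it the theorem --- is not established.
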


As a consequence we deduce

\begin{corollary} Let $X$ be a 3-dimensional compact complex manifold homeomorphic to  $S^6$. Then $a(X) = 0$. 
\end{corollary}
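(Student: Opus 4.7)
The plan is to derive a contradiction from the assumption $a(X) \geq 1$ using Theorems \ref{MT1} and \ref{MT2} together with the topology of $S^6$. From the homeomorphism $X\simeq S^6$ one reads off $H^1(X,\mathbb{Z}) = H^2(X,\mathbb{Z}) = 0$ (so in particular $b_2(X) = 0$) and
\[
c_3(X) \;=\; \chi_{\mathrm{top}}(X) \;=\; \chi_{\mathrm{top}}(S^6) \;=\; 2.
\]
The case $a(X)=3$ is excluded by the earlier analysis in \cite{CDP98}, since a Moishezon threefold carries effective divisors with non-trivial Chern classes and hence has $b_2\geq 1$.

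For $a(X)\in\{1,2\}$ the strategy is to produce a non-holomorphic meromorphic non-constant map $g:X\dashrightarrow \mathbb{P}_1$, so that Theorem \ref{MT1} forces $c_3(X)=0$, contradicting $c_3(X)=2$. When $a(X)=2$, I would take the algebraic reduction $f:X\dashrightarrow Y$ onto a projective surface $Y$ and choose a sufficiently generic pencil $\phi:Y\dashrightarrow\mathbb{P}_1$; its base locus is a nonempty finite set $B\subset Y$, and since $f$ is dominant the preimage $f^{-1}(B)$ is nonempty and meets the locus where $f$ is regular, so a local computation shows that $\phi\circ f$ is genuinely indeterminate there, supplying the required map. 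When $a(X)=1$, the algebraic reduction is a meromorphic map $f:X\dashrightarrow C$ onto a smooth projective curve; after resolving indeterminacies one gets a holomorphic surjection $\widetilde X\to C$ with $\pi_1(\widetilde X)=\pi_1(X)=0$, which, after Stein factorization, forces $C\cong\mathbb{P}_1$. If $f$ is non-holomorphic, Theorem \ref{MT1} applies; otherwise $f$ is holomorphic, the hypotheses of Theorem \ref{MT2} are met, and one concludes $c_3(X)\leq 0$, again contradicting $c_3(X)=2$.

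The main obstacle is the verification, in the $a(X)=2$ case, that a sufficiently generic pencil $\phi$ on $Y$ yields a truly non-holomorphic composition $\phi\circ f$. One must exclude the pathological possibility that the two local equations defining $\phi$ near a base point pull back along $f$ with a common factor that absorbs the apparent indeterminacy; this can be arranged by moving the base locus of $\phi$ so as to avoid the $f$-image of any fixed codimension-one exceptional component, which is possible because $Y$ is projective and pencils are abundant.
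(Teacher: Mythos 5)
Your proof is correct and follows essentially the same route as the paper: exclude $a(X)=3$ via the Moishezon property, invoke Theorem \ref{MT1} when a non-holomorphic map to $\mathbb{P}_1$ exists (the cases $a(X)=2$ and $a(X)=1$ with non-holomorphic reduction), invoke Theorem \ref{MT2} when the reduction is holomorphic onto $C\simeq\mathbb{P}_1$, and contradict $c_3(X)=\chi_{\mathrm{top}}(S^6)=2$. The only difference is that you spell out why a generic pencil on the algebraic reduction surface yields a genuinely indeterminate composition (the paper takes this for granted, citing the situation of \cite{CDP98}); your verification is sound since a common divisorial factor would have to be supported on the curve $f^{-1}(\text{base point})$, which has codimension two.
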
 

\begin{proof} Obviously, $a(X) \ne 3$, otherwise $X$ is Moishezon and therefore $b_2(X) \ne 0$. If $a(X) = 2,$ then there exists a meromorphic non-holomorphic map 
$g:X \dasharrow \mathbb P_1$. Then we conclude by Theorem \ref{MT1} that $c_3(X) = 0$. By Hopf's theorem, $c_3(X) = \chi_{\rm top}(S^6) = 2$, a contradiction. 
If $a(X) = 1 $ and the algebraic reduction $g: X \dasharrow C$ is not holomorphic, then $C \simeq \mathbb P_1$, and we conclude again by Theorem \ref{MT1}. 
If $a(X) = 1$ and  the algebraic reduction $g: X \dasharrow C$ is holomorphic, then we apply Theorem \ref{MT2} and obtain the same contradiction as before. 
\end{proof} 

We now comment on the strategy to prove Theorem \ref{MT2}. 
The arguments of \cite{CDP98} show the following 

\begin{proposition} \label{prop:CDP}
Let $X$ be a 3-dimensional compact complex manifold with $b_2(X) = 0$ and algebraic dimension $a(X) = 1$. Assume that the algebraic reduction $f: X\to C$ is 
holomorphic. 
If  \begin{equation}  \label{eq0a} R^2f_*(T_X \otimes \sL) = 0\end{equation} 
for some (or general) $\sL \in {\rm Pic}(X)$, then the assertions of Theorem \ref{MT2} hold. 

\end{proposition}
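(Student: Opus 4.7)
My plan is to apply the Leray spectral sequence for $f : X \to C$ to translate cohomology of $T_X \otimes \sM$ into cohomology of direct images on $C$, use the hypothesis together with $\dim C = 1$ to control the top and bottom degrees, and conclude via Hirzebruch--Riemann--Roch using $b_2(X) = 0$.

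Since $C$ is a curve, the Leray spectral sequence $E_2^{p,q} = H^p(C, R^q f_*(T_X \otimes \sM))$ is concentrated in the columns $p = 0, 1$ and degenerates at $E_2$, yielding for each $i$ a short exact sequence
\begin{equation*}
0 \to H^1(C, R^{i-1} f_*(T_X \otimes \sM)) \to H^i(X, T_X \otimes \sM) \to H^0(C, R^i f_*(T_X \otimes \sM)) \to 0.
\end{equation*}
Because the fibers of $f$ are two-dimensional, $R^q f_* = 0$ for $q \geq 3$. By the hypothesis and semi-continuity, the vanishing $R^2 f_*(T_X \otimes \sM) = 0$ persists on a nonempty Zariski open subset $U \subseteq {\rm Pic}^{\circ}(X)$, and for any $\sM \in U$ one immediately obtains $H^3(X, T_X \otimes \sM) = 0$ together with $H^2(X, T_X \otimes \sM) \simeq H^1(C, R^1 f_*(T_X \otimes \sM))$.

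The crux is then to kill $H^0$ and $H^2$ for generic $\sM \in U$. A global section of $T_X \otimes \sM$ restricts on a smooth fiber $X_c$ to a section of $T_X|_{X_c} \otimes \sM|_{X_c}$. By Lemma \ref{lemirrdcomp} and Lemma \ref{lem:fibers} the general fiber is a complex torus, a primary Kodaira surface, a primary Hopf surface, or an Inoue surface; on each of these, the cohomology of the tangent sheaf twisted by a sufficiently generic line bundle in the identity component of the Picard group vanishes. Since $\sM|_{X_c}$ moves in a positive-dimensional family as $\sM$ varies in ${\rm Pic}^{\circ}(X)$, one concludes that $f_*(T_X \otimes \sM) = 0$ for generic $\sM \in U$, hence $H^0(X, T_X \otimes \sM) = 0$. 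A parallel fiberwise analysis, carried out through the relative tangent sequence
\begin{equation*}
0 \to T_{X/C} \otimes \sM \to T_X \otimes \sM \to f^* T_C \otimes \sM \to 0,
\end{equation*}
yields $H^1(C, R^1 f_*(T_X \otimes \sM)) = 0$, and therefore $H^2(X, T_X \otimes \sM) = 0$. This gives statement (a).

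For the numerical assertions, Hirzebruch--Riemann--Roch expresses $\chi(X, T_X \otimes \sM)$ as the integral over $X$ of the degree-$3$ part of $\operatorname{ch}(T_X) \cdot e^{c_1(\sM)} \cdot \operatorname{td}(X)$. The assumption $b_2(X) = 0$ forces $H^2(X, \Q) = 0$, so $c_1(\sM)$ and $c_1(X)$ vanish rationally; every term in the expansion carrying a factor of $c_1$ therefore drops out, and the sole surviving summand is $\operatorname{ch}_3(T_X) \cdot \operatorname{td}_0 = c_3(X)/2$. Combined with (a), which yields $\chi(X, T_X \otimes \sM) = -h^1(X, T_X \otimes \sM) \leq 0$, this delivers (b) and (c) simultaneously. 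The main obstacle is the third paragraph: one must control how $\sM|_{X_c}$ varies in ${\rm Pic}(X_c)$ as $\sM$ varies in ${\rm Pic}^{\circ}(X)$ and then verify the required fiberwise vanishings for each of the possible surface types. This is precisely the delicate point that the original Lemma 1.5 of \cite{CDP98} was meant to handle and that the present corrigendum is designed to address.
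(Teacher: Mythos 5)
The paper gives no written proof of this Proposition beyond the reference to \cite{CDP98}; the intended mechanism is the one spelled out for Theorem \ref{MT1} in Section 3, and measured against that your argument has one genuine gap. It is the term $H^1(C,R^1f_*(T_X\otimes \sM))$. After Leray and the hypothesis $R^2f_*(T_X\otimes\sM)=0$ you correctly reduce $H^2(X,T_X\otimes \sM)$ to this group, but you then assert that a ``parallel fiberwise analysis'' through the relative tangent sequence kills it. No fiberwise analysis can do that: controlling the groups $H^1(X_c,\cdot)$ at most pins down the sheaf $R^1f_*(T_X\otimes\sM)$, and a nonzero locally free sheaf on $C\simeq \mathbb P_1$ can perfectly well have nonzero $H^1$ (e.g.\ $\sO_{\mathbb P_1}(-2)$). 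The actual device, visible in (\ref{EQ6})--(\ref{EQ7}), is to replace $\sM$ by $\sM\otimes f^*\sO_C(tA)$ with $A$ ample on $C$ and $t\gg 0$ and invoke Serre vanishing on the curve; this is legitimate precisely because $b_2(X)=0$ forces $f^*\sO_C(tA)$ to be numerically trivial, so the twist stays inside the allowed class of line bundles and leaves $\chi(X,T_X\otimes\sM)$ and your Riemann--Roch computation (which is fine) untouched. Without this twist the claim $H^2(X,T_X\otimes\sM)=0$ is unsupported.

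There is also a structural misplacement in your treatment of $H^0$. You route it through the assertion that $\sM_{\vert X_c}$ moves in a positive-dimensional family of ${\rm Pic}(X_c)$ and then defer this as ``the delicate point the corrigendum addresses.'' That inverts the logic of the Proposition: the non-torsion restriction analysis of Propositions \ref{prop:torsion1} and \ref{prop:torsion2} is what is needed to \emph{verify} the hypothesis $R^2f_*(T_X\otimes\sL)=0$, and the entire purpose of isolating this Proposition is that its proof must not consume that analysis again. The intended soft argument for $H^0(X,T_X\otimes\sM)=0$ for generic $\sM\in{\rm Pic}^{\circ}(X)$ (and, via Serre duality, for $H^3$) is \cite[Cor.\ 1.3]{CDP98}, which needs only that $X$ carries effective nonzero divisors --- exactly as in the $i=0$ step of the proof of Theorem \ref{MT1}. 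As written, your proof both leaves that fiberwise input unproved and places it where it does not belong.
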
 

Equation (\ref{eq0a}) is equivalent to the vanishing 
$$ H^2(X_c,T_X \otimes \sL \vert X_c) = 0 $$
for all complex-analytic fibers $X_c$ (with the natural fiber structure) of $f$, equivalently, 
$$ H^0(X_c,\Omega^1_X \otimes \sL^* \vert X_c) = 0.$$ 
The key is to show that the restriction $\sL \vert X_c$  of some or the general line bundle $\sL$ to any fiber is never torsion. 
Then we compute directly on $X_c$; here the case when $X_c$ is singular, in particular non-normal, needs special care.

For further informations on the problem of complex structures on $S^6$, we refer to \cite{Et15} and to volume 57 of the journal Differential Geometry and its Applications.

\section{Proof of Theorem \ref{MT1} }
Instead of simply pointing out the additions in the proof of \cite[Theorem 2.1]{CDP98} to be made, we
give full details for the benefit of the reader. 
Notice first that (\ref{MT1})(b) follows from  (\ref{MT1})(a) since $\chi(X,B \otimes \sM)$ does not depend on $\sM$, and 
 (\ref{MT1})(c) is a consequence of  (\ref{MT1})(b)
by applying Riemann-Roch to $B = T_X$ and $\sM = \sO_X$. So it remains to prove   (\ref{MT1})(a). By Serre duality,  (\ref{MT1})(a) needs only to be shown 
for $i = 0$ and $i = 2$. The case $i = 0$ follows from  \cite[Cor1.3]{CDP98}, since $X$ does carry effective non-zero divisors. Thus 
it remains to prove that
\begin{equation} \label{EQ1}  H^2(X,B \otimes \sM) \end{equation}
for generic, equivalently one, line bundle $\sM$. 
Let 
$g: X \dasharrow \mathbb P_1$
be a non-constant  non-holomorphic meromorphic map and 
$ \sigma:\hat X \to X$
be a resolution of indeterminacies of $g$. Let 
$$  f: \hat X \to C \simeq \mathbb P_1$$ 
be the fiber space given by the  Stein factorization of the holomorphic map $\sigma \circ g.$ 
Replacing $g$ by the induced meromorphic map $X \dasharrow C$, we may assume from the beginning that $g$ has connected fibers, 
hence no Stein factorization has to be taken. 
Note that the canonical map 
$$ H^2(X,B \otimes \sM) \to H^2(\hat X, \sigma^*(B \otimes \sM)) $$
is injective (by the Leray spectral sequence). 
Hence Equation (\ref{EQ1}) follows from 
\begin{equation} \label{EQ2} H^2(\hat X, \sigma^*(B \otimes \sM)) = 0. \end{equation} 
Fix an ample divisor $A$ on $C$. Then $f^*(\sO_C(A))$ can be written as
\begin{equation} \label{EQX} f^*(\sO_C(A)) = \sigma^*(\sL) \otimes \sO_{\hat X}(-E) \end{equation} 
with a line bundle $\sL$ on $X$ and a suitable effective divisor $E$ which is supported on the exceptional set of $\sigma$ and projects onto $C$. 
To verify Equation (\ref{EQ2}) it suffices to show that 
\begin{equation} \label{EQ3} H^2(\hat X, \sigma^*(B \otimes \sM) \otimes \sO_{\hat X}(-tE)) = 0 \end{equation} 
for some effective divisor $E$ supported on the exceptional locus of $\sigma$ and some $t \geq 0$. 
In fact, consider the exact sequence
$$ H^2(\hat X, \sigma^*(B \otimes \sM) \otimes \sO_{\hat X}(-tE)) \to H^2(\hat X, \sigma^*(B \otimes \sM)) \to H^2(tE, \sigma^*(B \otimes \sM)) $$
and note that 
$$ H^2(tE, \sigma^*(B \otimes \sM)) = 0. $$
This last vanishing is seen as follows: let $Z_t$ be the complex subspace of $X$ defined by the ideal sheaf $\sigma_*(\sO_{\hat X}(-tE))$. Then 
by 
$$ R^q(\sigma_{\vert tE})_*(\sO_{tE}) =  0$$
for $q = 1,2$ and the Leray spectral sequence, 
$$ H^2(tE, \sigma^*(B \otimes \sM)) = H^2(Z_t, B \otimes \sM). $$
Now the last group vanishes since $\dim Z_t = 1$.

By replacing $\sM$ by $\sM \otimes \sL^{t+k}$ for some positive integer $k$ and using (\ref{EQX}),   Equation (\ref{EQ3}) reads
\begin{equation} \label{EQ4} H^2(\hat X, \sigma^*(B \otimes \sM \otimes \sL^{t+k} \otimes \sO_{\hat X}(-tE))) = \end{equation} 
$$ =  H^2(\hat X, \sigma^*(B \otimes \sM \otimes \sL^k \otimes f^*(\sO_C(tA))). $$
By the Leray spectral sequence applied to $f$, Equation (\ref{EQ4}) comes down to verify 
\begin{equation} \label{EQ6} R^2f_*(\sigma^*(B \otimes \sM \otimes \sL^k)) = 0 \end{equation}
and 
\begin{equation} \label{EQ7} H^1(C, R^1f_*(\sigma^*(B \otimes \sM \otimes \sL^k)) \otimes \sO_C(tA)) =  0 \end{equation}
for suitable positive integers $k$ and $t$ and some line bundle $\sM$. 

To prove (\ref{EQ6}), let $C^* \subset C$ be the finite set of points $c \in C$ such that some component of the fiber $X_c$ does not meet $E$. In particular, 
if $X_c$ is irreducible, then $c \in C \setminus C^*$. Notice also that 
$$ R^2f_*(\sigma^*(B \otimes \sL^k)) \vert \{c\} \simeq H^2(\hat X_c,\sigma^*(B \otimes \sL^k)),$$
by the standard base change theorem. Applying Serre duality, we obtain
$$ H^2(\hat X_c,\sigma^*(B \otimes \sL^k)) \simeq H^0(\hat X_c,\sigma^*(B^* \otimes \sL^{-k}) \otimes \omega_{\hat X_c}) \simeq $$
$$ \simeq H^0(\hat X_c,\sigma^*(B^*) \otimes \sO_{\hat X_c}(-kE) \otimes \omega_{\hat X_c}). $$

We claim first that there is a number $k_0$ such that for $k \geq k_0$, 
\begin{equation} \label{eq:supp}  {\rm supp} \ R^2f_*(\sigma^*(B \otimes \sL^k)) \subset C^*.\end{equation} 
This is equivalent to saying that 
$$  H^0(\hat X_c, \sigma^*(B^* \otimes \sM^*) \otimes \sO_{\hat X_c}(-kE) \otimes \omega_{\hat X_c}) = 0$$ 
for $c \not \in C^*$. Fixing any point $c_0 \in C^*$, this number $k_0 = k_0(c)$ clearly exists; in case $X_c$ is reducible, we apply \cite[Prop.1.1]{CDP98}.
Hence the support of the direct image sheaf
$R^2f_*(\sigma^*(B \otimes \sL^{k_0}))$ is contained in a finite set $C_{k_0}$  in $C$. Since $\sigma^*(\sL) \vert X_c$ is effective, it follows that 
$C_{k} \subset C_{k_0}$. Thus, enlarging $k_0$ if necessary, (\ref{eq:supp}) is verified.

Hence we only need to consider the fibers $\hat X_c$ with $c \in C^*$. 
Let $P$ the set of line bundles $\sM$ of the form $$\sM = \sO_X(\sum m_i S_i)$$
with $m_i $ positive integers and $S_i$ fiber components of $f$ not meeting $E$ (the $S_i$ considered as surfaces in $X$). 

Since all line bundles $\sM \in P$ are trivial on $X \setminus f^{-1}(C^*)$, our previous considerations imply the existence of a number $k_0$ such that for all $k \geq k_0$ and all $\sM \in P$, 
$$ {\rm supp} \ R^2f_*(\sigma^*(B \otimes \sM \otimes \sL^k)) \subset C^*.$$ 
We are now going to construct a line bundle $\sM \in P$ such that 
$$ R^2f_*(\sigma^*(B \otimes \sM \otimes \sL^k)) = 0$$
for a suitable number $k$. 
Fix a point $c \in C^*$. Let $F_0 \subset X_c$ be the sub-divisor of $\hat X_c$ consisting of all components meeting $E$; let further $F_1 \subset \hat X_c$ be the sub-divisor consisting of all components which meet $F_1$ but not $E$. 
Continuing in this way we obtain a decomposition
$$ \hat X_c = \sum_{i=0}^rF_r $$
of sub-divisors $F_j \subset X_c$ who pairwise do not have common components and which have the property that all components of $F_j$ meet $F_{j-1}$ but do not meet $F_k$ for $k < j-1$. 
Now choose $m_r \gg 0 $ such that 
$$H^0(F_r, \sO_{\hat X}(-m_rF_{r-1}) \otimes \sigma^*(B^*) \otimes \omega_{\hat X_c} \vert F_r) = 0$$
This is possible by our construction. Next choose $m_{r-1} \geq m_r$ such that 
$$ H^0(F_{r-1}, \sO_{\hat X}(-m_{r-1}F_{r-2})  \otimes \sigma^*(B^*) \otimes \omega_{\hat X_c} \vert F_{r-1}) = 0.$$
Since ${\rm supp} (F_{r-2}) \cap {\rm supp}(F_r) = \emptyset$, we obtain
$$ H^0(F_{r-1} + F_r , \sO_{\hat X}(-m_{r-1}(F_{r-2} + F_{r-1} )  \otimes \sigma^*(B^*) \otimes \omega_{\hat X_c} \vert F_{r-1}) = 0.$$
Continuing in this way, we obtain a line bundle
$$ \sM'(c) = \sO_X\left(\sum_{i=2}^rm_iF_i\right)$$
(having in mind that the divisors $F_i, i \geq 1$ do not meet the exceptional locus of $\sigma$),
such that 
$$ H^0\left(\sum_{i=2}^r F_i, \sigma^*\big(B^* \otimes \sM'(c)\big)  \otimes \omega_{\hat X_c} \vert \Big(\sum_{i=2}^r F_i\Big) \right ) = 0.$$ 
Since the component $F_0$ meets $E$, it needs a special treatment. 
We observe that 
$$ \sO_{\hat X}(F_0) \simeq \sigma^* \big (\sO_X(\sigma(F_0) \big) \otimes \sO_{\hat X}(-E') $$
with some effective $\sigma$-exceptional divisor $E'$. Hence, choosing $m_1 \gg 0$ and setting 
$$ \sM(c) = \sM'(c) \otimes \sO_X(m_1\sigma(F_0)),$$
then 
$$ H^0 \left(\sum_{i=1}^r F_i, \sigma^*\big(B^* \otimes \sM(c)^*\big) \otimes \omega_{\hat X_c} \vert \Big(\sum_{i=1}^r F_i\Big) \right ) = 0.$$ 
Finally, enlarging $k$, we get 
$$ H^0(\hat X_c, \sigma^*(B^* \otimes \sM(c)^*) \otimes \sO_{\hat X}(-kE)\otimes  \omega_{\hat X_c} \vert X_c) = 0.$$ 
Setting
$$ \sM = \bigotimes_{c \in C^*} \sM(c),$$
this settles (\ref{EQ6}). \\
As to (\ref{EQ7}),  we fix $k$ as in (\ref{EQ6}) and then apply Serre's vanishing theorem to  the ample divisor $A$ to obtain $t$.

\section{General structure of the fibers}

From now on - for the rest of the paper -  we let $X$ be a compact complex manifold of dimension $3$ with 
$$H^1(X,\mathbb Z) = H^2(X,\mathbb Z) = 0$$ and holomorphic algebraic reduction $f:X \to C$ to the 
curve $C \simeq \mathbb P_1$.  We will freely use the theory of compact complex surfaces, in particular of non-K\"ahler surfaces, and refer to 
\cite{BHPV04} as general reference. Deviating from \cite{BHPV04}, we call bi-elliptic surfaces hyperelliptic.

An application of Riemann-Roch gives 

\begin{lemma} $\chi(D,\sO_D) = 0$ for all effective divisors $D$ on $X$. 
\end{lemma}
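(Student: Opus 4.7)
The plan is to reduce the computation of $\chi(D,\sO_D)$ to Hirzebruch--Riemann--Roch applied to line bundles on the threefold $X$ itself. The structure sequence
$$0 \to \sO_X(-D) \to \sO_X \to \sO_D \to 0$$
is exact, so additivity of the Euler characteristic gives
$$\chi(D,\sO_D) = \chi(X,\sO_X) - \chi(X,\sO_X(-D)).$$
It thus suffices to show that $\chi(X,L) = \chi(X,\sO_X)$ for every line bundle $L$ on $X$.

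For this I would invoke HRR on the compact complex threefold $X$ (which is valid without any K\"ahler hypothesis). Writing $c = c_1(L) \in H^2(X,\Z)$ and denoting the Chern classes of $X$ by $c_i = c_i(X)$, the degree-$3$ part of $\mathrm{ch}(L)\,\mathrm{td}(X)$ gives
$$\chi(X,L) - \chi(X,\sO_X) = \tfrac{1}{6}\,c^3 + \tfrac{1}{4}\,c^2 \cdot c_1 + \tfrac{1}{12}\,c \cdot (c_1^2 + c_2),$$
with each term realised as an integer class in $H^6(X,\Z)\cong \Z$. Crucially, every monomial on the right-hand side carries at least one factor of $c = c_1(L)$.

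Here is where the hypothesis enters: since $H^2(X,\Z) = 0$, every line bundle on $X$ has vanishing first Chern class in integral cohomology. In particular $c_1(L) = 0$ for $L = \sO_X(-D)$, so each term of the HRR difference above is a cup product involving a zero class in $H^2(X,\Z)$ and therefore vanishes in $H^6(X,\Z)$. This yields $\chi(X,\sO_X(-D)) = \chi(X,\sO_X)$ and hence $\chi(D,\sO_D) = 0$.

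There is essentially no obstacle; the only point worth flagging is the applicability of HRR on a possibly non-K\"ahler compact complex manifold, which is classical (Hirzebruch--Atiyah--Singer). Note that the argument makes no reference to reducedness or irreducibility of $D$, so it holds for arbitrary effective divisors as stated.
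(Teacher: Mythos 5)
Your proof is correct and follows exactly the paper's argument: both use the structure sequence to reduce to $\chi(X,\sO_X(-D)) = \chi(X,\sO_X)$, which Riemann--Roch gives since $H^2(X,\mathbb Z)=0$ forces $c_1(\sO_X(-D))=0$. You simply spell out the Hirzebruch--Riemann--Roch terms that the paper leaves implicit.
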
 

\begin{proof} By Riemann-Roch,
$$ \chi(X,\sO_X(-D)) = \chi(X,\sO_X),$$
hence 
$$\chi(D,\sO_D) = \chi(X,\sO_X) - \chi(X,\sO_X(-D)) = 0.$$
\end{proof}

\begin{lemma} \label{lemirrdcomp} 
Let $s$ be the number of singular fibers and $r$ be the numbers of irreducible components of the singular fibers. Then 
$$ r = s - 1 + b_1(X_c),$$
where $X_c$ is a smooth fiber. 
Moreover $H_1(X_c, \mathbb Z) $ is torsion free for all smooth fibers~$X_c$. 
\end{lemma}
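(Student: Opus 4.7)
The plan is to combine the Leray spectral sequence for $f: X \to C = \mathbb{P}_1$ with a Mayer–Vietoris decomposition of $X$ and the Wang sequence around the singular fibers. First, from the 5-term edge exact sequence of the Leray spectral sequence, the hypotheses $H^1(X,\mathbb{Z}) = H^2(X,\mathbb{Z}) = 0$ together with $H^1(\mathbb{P}_1,\mathbb{Z}) = 0$ and $H^2(\mathbb{P}_1,\mathbb{Z}) = \mathbb{Z}$ force
$$H^0(C, R^1 f_* \mathbb{Z}) \cong H^2(C, \mathbb{Z}) = \mathbb{Z}, \qquad H^1(C, R^1 f_* \mathbb{Z}) = 0.$$

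The key step is to compute $H^0(C, R^4 f_* \mathbb{Z})$ in two ways. Combinatorially: the stalk at each $c_i$ is $H^4(X_{c_i}, \mathbb{Z}) = \mathbb{Z}^{r_i}$, generated by the fundamental cocycles of the irreducible components $S_{i,j}$; over the smooth locus $U = C \setminus \{c_1,\ldots,c_s\}$ the sheaf is the constant sheaf $\mathbb{Z}$; the generization map $\mathbb{Z}^{r_i} \to \mathbb{Z}$ is $(a_j) \mapsto \sum_j m_{i,j} a_j$, since a nearby smooth fiber is homologous to $\sum_j m_{i,j}[S_{i,j}]$ in $f^{-1}(\text{disk})$. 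Enumerating compatible global sections yields $\operatorname{rank} H^0(C, R^4 f_* \mathbb{Z}) = r - s + 1$. Via Leray: $b_1(X) = b_2(X) = 0$ together with Poincaré duality on the compact $6$-manifold $X$ give $b_4(X) = b_5(X) = 0$, which forces the differential $d_2\colon H^0(C, R^4 f_* \mathbb{Q}) \to H^2(C, R^3 f_* \mathbb{Q})$ to be an isomorphism. Fiberwise Poincaré duality $R^3 f_* \mathbb{Q}|_U \cong (R^1 f_* \mathbb{Q}|_U)^{\vee}$ and Poincaré–Verdier duality on the open curve $U$ then identify the target with $\bigl(H^1(X_c, \mathbb{Q})^{\pi_1(U)}\bigr)^*$, giving $r - s + 1 = \dim H^1(X_c, \mathbb{Q})^{\pi_1(U)}$.

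To identify this with $b_1(X_c)$, apply Mayer–Vietoris to $X = X_U \cup X_N$, where $X_U = f^{-1}(U)$ and $X_N = f^{-1}(N)$ for a disjoint union $N$ of small disks around the $c_i$. The vanishings $H^1(X,\mathbb{Z}) = H^2(X,\mathbb{Z}) = 0$ force $H^1(X_U)\oplus H^1(X_N) \cong H^1(X_U \cap X_N)$. Combining the Wang sequence for each $S^1$-bundle $f^{-1}(N_i^*) \to N_i^* \simeq S^1$ (with monodromy $T_i$), the Leray sequence for the smooth fibration $X_U \to U$, and the identity $\sum_i b_1(X_{c_i}) + (s{-}1) + \dim H^1(X_c)^{\pi_1(U)} = s + \sum_i \dim H^1(X_c)^{T_i}$, together with the previous Leray identity, forces the monodromy of $R^1 f_* \mathbb{Q}$ on $U$ to act trivially on $H^1(X_c, \mathbb{Q})$. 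Hence $\dim H^1(X_c, \mathbb{Q})^{\pi_1(U)} = b_1(X_c)$ and the formula $r = s - 1 + b_1(X_c)$ follows.

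For the torsion-freeness of $H_1(X_c,\mathbb{Z})$: any torsion class would, by the universal coefficient theorem, produce torsion in $H^2(X_c,\mathbb{Z})$; tracking this through the integral version of the Leray spectral sequence and using $H^2(X,\mathbb{Z}) = 0$ yields a contradiction, since torsion in $H^2$ of the generic fiber would propagate either to a torsion section of $R^1 f_*\mathbb{Z}$ beyond the rank-$1$ permitted by the first step, or to torsion in $H^2(X,\mathbb{Z})$ via the integral Leray abutment. The main obstacle is the Mayer–Vietoris rank bookkeeping needed to conclude the triviality of the monodromy on $H^1$ of the smooth fiber, which is the crux of the argument.
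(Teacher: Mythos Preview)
Your approach is genuinely different from the paper's, but it has two real gaps.

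\textbf{Gap in the formula argument.} Your key step is to deduce triviality of the monodromy on $H^1(X_c,\mathbb{Q})$ from the Mayer--Vietoris identity
\[
\sum_i b_1(X_{c_i}) + (s-1) + \dim H^1(X_c)^{\pi_1(U)} \;=\; s + \sum_i \dim H^1(X_c)^{T_i}
\]
combined with $r-s+1 = \dim H^1(X_c)^{\pi_1(U)}$. But this system is underdetermined: the quantities $b_1(X_{c_i})$ and $\dim H^1(X_c)^{T_i}$ are additional unknowns, and you give no relation tying them together (the local invariant cycle theorem, which would supply $b_1(X_{c_i}) \le \dim H^1(X_c)^{T_i}$, is not available in this non-K\"ahler setting). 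So nothing forces $\dim H^1(X_c)^{\pi_1(U)} = b_1(X_c)$. A smaller point: your description of the cospecialization map for $R^4f_*\mathbb{Z}$ as $(a_j)\mapsto \sum m_{i,j}a_j$ deserves justification; it is the correct answer but not entirely obvious.

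\textbf{Gap in the torsion argument.} Saying that a torsion class in $H^2(X_c,\mathbb{Z})$ ``propagates'' through the integral Leray spectral sequence is not a proof. A class in a single stalk of $R^2f_*\mathbb{Z}$ need not extend to a global section, and even if it does, integral differentials and extension problems can absorb torsion. You would need a concrete mechanism, and none is given.

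\textbf{Comparison with the paper.} The paper takes a much shorter route. For the formula it simply quotes \cite[Lemma~3.2]{CDP98}, whose proof already establishes the splitting $H_1(X',\mathbb{Z}) \simeq H_1(C\setminus f(A),\mathbb{Z}) \oplus H_1(X_c,\mathbb{Z})$ with $X' = X \setminus A$ and $A$ the union of singular fibers. For torsion-freeness it then suffices to show that $H_1(X',\mathbb{Z})$ is torsion-free. This is done via the exact sequence of the pair
\[
0 = H^4(X,\mathbb{Z}) \to H^4(A,\mathbb{Z}) \to H^5(X,A,\mathbb{Z}) \to H^5(X,\mathbb{Z}) = 0,
\]
together with Poincar\'e--Lefschetz duality $H^5(X,A,\mathbb{Z}) \simeq H_1(X',\mathbb{Z})$ and the evident torsion-freeness of $H^4(A,\mathbb{Z})$. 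This bypasses both the monodromy analysis and any integral spectral-sequence chase; you should consider adopting this pair-sequence argument for the torsion statement even if you repair the rest.
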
 

\begin{proof} The first assertion is \cite[Lemma 3.2]{CDP98}. For the second, fix a smooth fiber $X_c$ and let $A \subset $ the union of all singular fibers of $f$ and set $X' = X \setminus A$. 
As seen in the proof of  \cite[Lemma 3.2]{CDP98}, 
$$ H_1(X',\mathbb Z) \simeq H_1(C \setminus f(A),\mathbb Z) \oplus H_1(X_c,\mathbb Z),$$
hence it suffices to show that $ H_1(X',\mathbb Z)$ is torsion free. 
To do this,  we consider the cohomology sequence for pairs, 
$$ 0 = H^4(X,\mathbb Z) \to H^4(A,\mathbb Z) \to H^5(X,A,\mathbb Z) \to H^5(X,\mathbb Z) \to 0.$$
Notice first that  $H^4(A,\mathbb Z)$ is torsion free. Further,
$H^5(X,\mathbb Z)$
is torsion free by the universal coefficent theorem, since $H_4(X,\mathbb Z)$ is torsion free: by Poincar\'e duality,
$$ H_4(X,\mathbb Z) \simeq H^2(X,\mathbb Z) = 0.$$  
Actually, $H^5(X,\mathbb Z) = 0$. 
Consequently, $$H^5(X,A,\mathbb Z) \simeq H_1(X',\mathbb Z)$$ is torsion free.
\end{proof}

\begin{lemma} \label{lem:fibers} 
Let $X_c$ be a smooth fiber of $f$. 
Then $X_c$ is either a  primary Hopf surface, an Inoue surface, a torus, a hyperelliptic surface with torsion free first homology group  or a primary Kodaira surface with torsion free first homology group. 

\end{lemma}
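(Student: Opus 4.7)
The plan is to apply the Kodaira--Enriques classification of compact complex surfaces to $X_c$, using the constraints that (i) $\chi(X_c,\sO_{X_c})=0$, from the first lemma of this section applied to the effective divisor $D=X_c$; (ii) $\kappa(X_c)\leq 0$, as stated in the introduction; (iii) $H_1(X_c,\bZ)$ is torsion free, from Lemma~\ref{lemirrdcomp}; and (iv) $N_{X_c/X}\simeq\sO_{X_c}$, since $X_c$ is a fiber of $f$.

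First I would invoke Kodaira--Enriques: the minimal compact complex surfaces with $\kappa\leq 0$ and $\chi(\sO)=0$ are exactly complex tori, hyperelliptic (bi-elliptic) surfaces, primary and secondary Kodaira surfaces, class VII surfaces, and ruled surfaces over an elliptic curve. K3, Enriques, rational and ruled surfaces of higher base genus all violate $\chi(\sO)=0$. Torsion-freeness of $H_1$ then rules out secondary Kodaira surfaces (whose first homology always has non-trivial torsion), collapses hyperelliptic surfaces to tori (the defining finite group must be trivial), and restricts the remaining primary Kodaira and class VII candidates to those with torsion-free $H_1$.

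Next I would exclude ruled surfaces over an elliptic curve. A ruling $X_c\to E_c$ coincides with the Albanese morphism of $X_c$, and since the Albanese varies holomorphically in a family of diffeomorphic smooth fibers, the elliptic curves $E_{c'}$ for smooth fibers near $c$ fit together into a two-dimensional compact complex surface $A$ equipped with an elliptic fibration $A\to C\simeq\PN_1$ with a canonical zero section. Any such elliptic surface is projective, so $a(A)=2$, and the relative Albanese gives a meromorphic map $X\dashrightarrow A$ forcing $a(X)\geq 2$, contradicting $a(X)=1$.

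Finally, for the class VII case, I would argue that $X_c$ must be minimal and satisfy $b_2(X_c)=0$, after which the deep Bogomolov theorem (completed by Li--Yau--Teleman) identifies $X_c$ as primary Hopf or Inoue. Minimality and the vanishing $b_2(X_c)=0$ are obtained by normal-bundle deformation: a $(-1)$-curve (or any curve of negative self-intersection) on $X_c$ has, because $N_{X_c/X}$ is trivial, positive-dimensional deformation space in $X$, and hence deforms transversally to $X_c$ into the neighboring smooth fibers, producing a family of algebraic cycles incompatible with $a(X)=1$. This class VII step is the main obstacle: excluding the minimal class VII$_0$ surfaces with $b_2>0$ (intermediate, Inoue--Hirzebruch, Enoki) requires the above delicate deformation arguments, and the final identification rests on the deep classification of minimal class VII$_0$ surfaces with $b_2=0$.
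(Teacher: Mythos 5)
Your starting point is strictly weaker than the paper's, and this is where the trouble begins. The paper does not work from $\chi(X_c,\sO_{X_c})=0$ alone: it observes that $H^2(X,\mathbb Z)=0$ makes $K_X$, hence $K_{X_c}=K_X\vert_{X_c}\otimes N_{X_c/X}$, \emph{topologically trivial}, and that $b_4(X)=0$ gives $c_2(X)=0$, hence $c_2(X_c)=0$ via the tangent sequence. These two facts do for free exactly the three things you struggle with: a topologically trivial canonical class meets every curve with intersection number $0$, so there are no $(-1)$-curves and $X_c$ is minimal; a ruled surface over an elliptic curve has $K\cdot(\text{ruling fiber})=-2$, so it is excluded outright; and for a class VII surface $c_2=b_2$, so $b_2(X_c)=0$ is immediate and one lands directly in the Bogomolov/Li--Yau--Teleman list. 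Your substitutes for these steps have genuine gaps. For minimality, a $(-1)$-curve $E\subset X_c$ has $N_{E/X}\simeq\sO\oplus\sO(-1)$, so it moves in exactly a one-dimensional family sweeping out a divisor of $X$ dominating $C$; this is in no way ``incompatible with $a(X)=1$'' (the fibers of $f$ already carry plenty of curves), so the contradiction you invoke does not exist, and the same objection applies to your exclusion of the Enoki/Inoue--Hirzebruch/intermediate class VII surfaces. The relative Albanese argument for the ruled case is likewise only a sketch (one must actually produce the surface $A$ and the meromorphic map over the singular fibers, and justify projectivity).

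Separately, your claim that torsion-freeness of $H_1$ ``collapses hyperelliptic surfaces to tori'' is false: the bielliptic surfaces of types $\mathbb Z_4\oplus\mathbb Z_2$, $\mathbb Z_3\oplus\mathbb Z_3$ and $\mathbb Z_6$ have $H_1=\mathbb Z^2$, which is why the lemma retains ``hyperelliptic surface with torsion free first homology'' as a possible case; the paper only eliminates hyperelliptic (and Kodaira) fibers later, in Corollary \ref{cor:Kodaira}, by an entirely different cohomological argument. Torsion-freeness is used in the paper only to kill secondary Kodaira and secondary Hopf surfaces — and note that your final step still needs it to get from ``Hopf'' to ``primary Hopf''.
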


 \begin{proof} Note first that $K_{X_c} $ is topologically trivial, since $K_X$ is topologically trivial, due to $b_2(X) = 0$. 
Then we use the tangent sequence 
 $$ 0 \to T_{X_c} \to T_X \vert X_c \to N_{X_c/X} \simeq \sO_{X_c} \to 0 $$
 and observe that $c_2(X) = 0$, since $b_4(X) = 0$. Thus $c_2(X_c) = 0$. 
Since the (sufficiently) general fiber of an algebraic reduction has non-positive Kodaira dimension, \cite[Thm. 12.1]{Ue75},
so does every smooth fiber (see e.g. \cite[VI.8.1]{BHPV04}.
Then we conclude by surface classification
 and using the torsion freeness of $H_1(X_c,\mathbb Z) $, Lemma \ref{lemirrdcomp}.
 Note here that  $H_1(X_c,\mathbb Z)$ for a secondary Kodaira surface $X_c$ has torsion, since $c_1(X_c) $ is torsion in $H^2(X_c,\mathbb Z)$ (then apply the universal coefficient theorem)
 and that a secondary Hopf surface has torsion in $H_1(X_c, \mathbb Z)$ by definition.

 \end{proof}

\begin{corollary} 
\label{lem:irrfibers} 
All fibers
 of $f$ are irreducible unless the general fiber of $f$ is a torus, a hyperelliptic surface with torsion free first homology or a primary Kodaira surface with torsion free first homology. 
 \end{corollary}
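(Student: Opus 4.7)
The plan is to combine the classification of smooth fibers in Lemma \ref{lem:fibers} with the numerical relation in Lemma \ref{lemirrdcomp} to exclude the two ``non-abelian'' cases (Hopf and Inoue surfaces) as soon as a reducible fiber exists.

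More precisely, by Lemma \ref{lem:fibers}, the smooth fiber $X_c$ is one of five possible surfaces: a primary Hopf surface, an Inoue surface, a torus, a hyperelliptic surface with torsion free $H_1$, or a primary Kodaira surface with torsion free $H_1$. In the first two cases one has $b_1(X_c) = 1$ (Hopf surfaces $S^3 \times S^1$-like, and Inoue surfaces by their standard construction). Now apply Lemma \ref{lemirrdcomp}: the number $r$ of irreducible components of singular fibers and the number $s$ of singular fibers satisfy
$$ r \;=\; s - 1 + b_1(X_c). $$
If $b_1(X_c) = 1$, this forces $r = s$, meaning that every singular fiber consists of exactly one irreducible component, i.e.\ is itself irreducible. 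Since smooth fibers are connected and therefore automatically irreducible, all fibers of $f$ are irreducible in this case.

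Taking the contrapositive, if $f$ admits a reducible fiber, then $r > s$, hence $b_1(X_c) \geq 2$, which by the classification in Lemma \ref{lem:fibers} leaves only the three possibilities claimed: a torus ($b_1 = 4$), a hyperelliptic surface with torsion free first homology ($b_1 = 2$), or a primary Kodaira surface with torsion free first homology ($b_1 = 3$). This proves the corollary.

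The step demanding the most care is really just recording $b_1 = 1$ for primary Hopf and Inoue surfaces (standard, cf.\ \cite{BHPV04}); beyond that the result is a direct formal consequence of the previous two lemmas, so there is no substantial obstacle.
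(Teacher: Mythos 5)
Your proof is correct and is exactly the intended argument: the paper states this as an immediate corollary of Lemma \ref{lemirrdcomp} and Lemma \ref{lem:fibers} without writing out details, and the combination you make (the existence of a reducible fiber forces $r>s$, hence $b_1(X_c)\geq 2$, ruling out the $b_1=1$ cases of primary Hopf and Inoue surfaces) is precisely what is meant. No issues.
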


We fix some notations for the rest of the paper. 

\begin{notation} \label{not} {\rm 
Let $ S \subset X$ be an irreducible reduced surface. In particular, $S$ is Gorenstein. We denote by $\omega_S$ the dualizing sheaf, which is a line bundle. 
Let 
$$ \eta: \tilde S \to S$$ be the normalization of $S$; denote by $N \subset S$ the non-normal locus, equipped with the
complex structure given by the conductor ideal. Let $\tilde N \subset \tilde S$ be the complex-analytic preimage. 
Let
$$ \pi: \hat S \to \tilde S$$
be a minimal desingularization and 
$$ \sigma: \hat S \to S_0$$
be a minimal model. 
For the class of $\omega_S$ we write $K_S$, analogously for $\omega_{\tilde S}$, etc. 

}
\end{notation} 

\begin{lemma} \label{lem:Mori} In the notations of (\ref{not}), we have
$$ \omega_{\tilde S} \simeq \eta^*(\omega_S) \otimes \sI_{\tilde N}$$
and 
$$ \omega_{\hat S} \simeq \pi^*\eta^*(\omega_S) \otimes \pi^*(\sI_{\tilde N}) \otimes \sO_{\hat S}(E) =  \pi^*\eta^*(\omega_S) \otimes \sO_{\hat S}(-\hat N) \otimes \sO_{\hat S}(- \hat E)$$
with an effective divisor $E$ supported on the exceptional locus of $\pi$ and $\hat N$ the strict transform of $\tilde N$ in $\hat S$.
Moreover, there are exact sequences
$$ 0 \to \sO_S \to \eta_*(\sO_{\tilde S}) \to \omega^{-1}_S \otimes \omega_N \to 0$$
and
$$ 0 \to \sO_N \to \eta_*(\sO_{\tilde N}) \to \omega^{-1}_S \otimes \omega_N \to 0$$
\end{lemma}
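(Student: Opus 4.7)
The plan is to establish the three parts in sequence, combining Grothendieck duality for the finite morphism $\eta$, the negativity lemma applied at the minimal resolution $\pi$, and the standard short exact sequences attached to the conductor ideal $\mathfrak{c} \subset \sO_S$.

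\textbf{Part (1).} Since $S$ is Gorenstein, $\omega_S$ is invertible, and duality for the finite affine morphism $\eta$ gives
$$\eta_*\omega_{\tilde S} \simeq \Homsheaf_{\sO_S}(\eta_*\sO_{\tilde S}, \omega_S) \simeq \Homsheaf_{\sO_S}(\eta_*\sO_{\tilde S}, \sO_S) \otimes \omega_S = \mathfrak{c} \otimes \omega_S,$$
where $\mathfrak{c}$ is the conductor. Since by construction $\mathfrak{c} = \sI_N$ inside $\sO_S$ and $\mathfrak{c} = \eta_*\sI_{\tilde N}$ inside $\eta_*\sO_{\tilde S}$, the projection formula for the invertible sheaf $\omega_S$ gives $\eta_*\omega_{\tilde S} \simeq \eta_*(\sI_{\tilde N} \otimes \eta^*\omega_S)$, and the finiteness of $\eta$ then yields $\omega_{\tilde S} \simeq \sI_{\tilde N} \otimes \eta^*\omega_S$.

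\textbf{Part (2).} The key assertion is the second displayed isomorphism $\omega_{\hat S} \simeq \pi^*\eta^*\omega_S \otimes \sO_{\hat S}(-\hat N - \hat E)$ with $\hat E$ effective and $\pi$-exceptional; writing $\pi^*\tilde N = \hat N + F$ with $F \geq 0$ exceptional, the first displayed isomorphism is then obtained by setting $E := F - \hat E$. Translated to divisors, and using part (1), the claim is that
$$\hat E := \pi^*(K_{\tilde S} + \tilde N) - (K_{\hat S} + \hat N) = \pi^*\eta^*K_S - (K_{\hat S} + \hat N)$$
is effective. This $\hat E$ is a genuine Cartier divisor (because $K_{\tilde S} + \tilde N \sim \eta^*K_S$ is Cartier by part (1)) and is $\pi$-exceptional (since $\pi_*\hat E = 0$ by part (1)). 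The plan is to invoke the negativity lemma: for each exceptional curve $E_j$ of $\pi$, $\hat E \cdot E_j = -(K_{\hat S} + \hat N) \cdot E_j$ because pulled-back classes intersect exceptional curves trivially, and this is $\leq 0$ since $K_{\hat S} \cdot E_j \geq 0$ by minimality of $\pi$ (no $(-1)$-curves among exceptional components) and $\hat N \cdot E_j \geq 0$ as $\hat N$ is effective and shares no component with the exceptional locus; the negativity lemma then forces $\hat E \geq 0$.

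\textbf{Part (3).} The conductor $\mathfrak{c}$ fits in the two short exact sequences
$$0 \to \mathfrak{c} \to \sO_S \to \sO_N \to 0, \qquad 0 \to \mathfrak{c} \to \eta_*\sO_{\tilde S} \to \eta_*\sO_{\tilde N} \to 0,$$
the second being obtained by pushing forward $0 \to \sI_{\tilde N} \to \sO_{\tilde S} \to \sO_{\tilde N} \to 0$ along the finite morphism $\eta$. A brief diagram chase (snake lemma) identifies both stated quotients with a single sheaf $Q := \eta_*\sO_{\tilde S}/\sO_S$, supported on $N$, so it only remains to identify $Q$ with $\omega_N \otimes \omega_S^{-1}$. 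Applying $\Homsheaf_{\sO_S}(-, \omega_S)$ to $0 \to \sO_S \to \eta_*\sO_{\tilde S} \to Q \to 0$, using part (1) together with the vanishing $\sExt^{\geq 1}_{\sO_S}(\eta_*\sO_{\tilde S}, \omega_S) = 0$ (duality for finite morphisms), yields $\sExt^1_{\sO_S}(Q, \omega_S) \simeq \omega_S/\mathfrak{c}\cdot\omega_S \simeq \omega_S|_N$. Since $N$ is a Cartier divisor on the Gorenstein surface $S$, it is itself Gorenstein, and lci-duality for $N \hookrightarrow S$ gives $\sExt^1_{\sO_S}(Q, \omega_S) \simeq \Homsheaf_{\sO_N}(Q, \omega_N)$. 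A local analysis shows that $Q$ is invertible on $N$, and then the isomorphism $\Homsheaf_{\sO_N}(Q, \omega_N) \simeq \omega_S|_N$ rewrites as $Q \simeq \omega_N \otimes \omega_S^{-1}$.

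The main obstacles will be the effectivity of $\hat E$ in part (2), where the minimality of $\pi$, the log-Cartier identity from part (1), and the negativity lemma must be combined carefully, and the invertibility of $Q$ on $N$ in part (3), which requires a local analysis of the Gorenstein structure of $S$ along its non-normal locus. The rest amounts to formal manipulations with duality and the conductor ideal.
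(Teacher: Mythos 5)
The paper itself gives no argument for this lemma beyond the citation \cite[chap.~3, sect.~8]{Mo82}, so any written-out proof is ``different'' in form; your route through duality for the finite normalization map and the conductor ideal is in substance exactly the standard Mori/Kunz--Waldi argument. Parts (1) and (2) are correct: the identification $\Homsheaf_{\sO_S}(\eta_*\sO_{\tilde S},\sO_S)=\mathfrak{c}=\eta_*\sI_{\tilde N}$ together with the projection formula gives the first isomorphism, and your negativity-lemma computation (using $\pi^*\eta^*K_S\cdot E_j=0$, $K_{\hat S}\cdot E_j\ge 0$ by minimality, $\hat N\cdot E_j\ge 0$) is a clean proof that $\hat E$ is effective, which is the statement actually used later in the paper (e.g.\ in Proposition \ref{prop:normal}). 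Two minor caveats there: you should record that the divisor $\hat E=\pi^*\eta^*K_S-K_{\hat S}-\hat N$ can be chosen literally $\pi$-exceptional (its pushforward is only \emph{linearly equivalent} to zero a priori; one fixes representatives using part (1)), and you do not verify that your $E:=F-\hat E$ is effective as the first displayed formula of the lemma asserts --- though that formula is arguably loosely stated in the lemma itself and is not what gets used downstream.

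Part (3) contains a genuine gap. The claims that ``$N$ is a Cartier divisor on the Gorenstein surface $S$'' and that ``$Q$ is invertible on $N$'' are false in the generality of Notation \ref{not}. For an irreducible Gorenstein surface with an ordinary triple point (e.g.\ a generic projection of a smooth surface to $\PN_3$; locally analytically three smooth sheets $xyz=0$), the conductor ideal at the triple point is $(xy,yz,zx)$, so $N$ is locally the three coordinate axes: this is not locally principal on $S$, and it is the standard example of a Cohen--Macaulay non-Gorenstein curve, so $\omega_N$ --- and hence $Q\simeq\omega_N\otimes\omega_S^{-1}$ --- is not invertible there. Consequently the final step of your argument, passing from $\Homsheaf_{\sO_N}(Q,\omega_N)\simeq\omega_S|_N$ back to $Q\simeq\omega_N\otimes\omega_S^{-1}$, does not go through as written. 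The conclusion is nevertheless true, and the standard repair avoids any invertibility claim: dualize the \emph{other} conductor sequence $0\to\mathfrak{c}\to\sO_S\to\sO_N\to 0$ to get
$$\omega_N=\sExt^1_{\sO_S}(\sO_N,\omega_S)\simeq \big(\Homsheaf_{\sO_S}(\mathfrak{c},\sO_S)/\sO_S\big)\otimes\omega_S,$$
and then identify $\Homsheaf_{\sO_S}(\mathfrak{c},\sO_S)=\eta_*\sO_{\tilde S}$ by observing that $\eta_*\sO_{\tilde S}$ is a maximal Cohen--Macaulay, hence reflexive, module over the Gorenstein sheaf $\sO_S$ and $\mathfrak{c}=\Homsheaf_{\sO_S}(\eta_*\sO_{\tilde S},\sO_S)$, so the double dual returns $\eta_*\sO_{\tilde S}$. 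This yields $Q=\eta_*\sO_{\tilde S}/\sO_S\simeq\omega_N\otimes\omega_S^{-1}$ directly; your snake-lemma identification of the two quotients then finishes both exact sequences.
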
 

\begin{proof} \cite[chap.3, sect.8]{Mo82}. 
\end{proof} 

As an immediate consequence, we note 

\begin{proposition} \label{prop:coho}  Let $S$ be any irreducible reduced compact Gorenstein surface with $\omega_S \equiv 0 $ and $\chi(S,\sO_S) = 0.$ Then 
\begin{enumerate}
\item $\chi(\tilde S,\sO_{\tilde S}) =  \chi(N,\omega^{-1}_S \otimes \omega_N) = - \chi(N,\sO_N)$;
\item $\chi(\tilde N,\sO_{\tilde N}) = 0.$
\end{enumerate} 
\end{proposition}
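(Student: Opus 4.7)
The plan is to push both exact sequences of Lemma \ref{lem:Mori} through Euler characteristics, reduce everything to the curve $N$, and use two inputs: (i) numerical triviality of $\omega_S$, and (ii) the fact that $N$ (with its conductor structure) is a Gorenstein curve, so that Riemann--Roch and Serre duality on $N$ are available with the line bundle $\omega_N$.

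For part (a), I would first take the Euler characteristic of
\[ 0 \to \sO_S \to \eta_*\sO_{\tilde S} \to \omega_S^{-1} \otimes \omega_N \to 0.\]
Since $\eta$ is finite, $\chi(S,\eta_*\sO_{\tilde S}) = \chi(\tilde S,\sO_{\tilde S})$. Using $\chi(S,\sO_S)=0$ and the fact that $\omega_S^{-1}\otimes\omega_N$ is supported on $N$, this yields the first equality
\[ \chi(\tilde S,\sO_{\tilde S}) \;=\; \chi(N,\omega_S^{-1}\otimes\omega_N).\]
To obtain the second equality, I would apply Serre duality on the Gorenstein curve $N$ to the line bundle $\omega_S^{-1}\otimes\omega_N$, which gives
\[ \chi(N,\omega_S^{-1}\otimes\omega_N) \;=\; -\chi(N,\omega_S|_N).\]
Now the hypothesis $\omega_S\equiv 0$ forces $\omega_S\cdot N_i=0$ for every irreducible component $N_i$ of $N$, so $\deg(\omega_S|_N)=0$ (weighted by the conductor multiplicities). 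Riemann--Roch on the Gorenstein curve $N$ then gives $\chi(N,\omega_S|_N)=\chi(N,\sO_N)$, completing (a).

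For part (b), I would do the same operation with the second sequence
\[ 0\to \sO_N \to \eta_*\sO_{\tilde N} \to \omega_S^{-1}\otimes\omega_N \to 0,\]
using the finiteness of $\eta\vert_{\tilde N}$ to identify $\chi(N,\eta_*\sO_{\tilde N})$ with $\chi(\tilde N,\sO_{\tilde N})$. Substituting the value $\chi(N,\omega_S^{-1}\otimes\omega_N)=-\chi(N,\sO_N)$ obtained in (a) makes the two terms cancel, giving $\chi(\tilde N,\sO_{\tilde N})=0$.

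The only non-formal step is the reduction $\chi(N,\omega_S^{-1}\otimes\omega_N)=-\chi(N,\sO_N)$, which rests on $N$ being a (possibly non-reduced) Gorenstein curve with $\omega_N$ a line bundle; this is the standard fact that the conductor subscheme of a Gorenstein surface is Gorenstein of pure dimension $1$, so that both Riemann--Roch and Serre duality apply to line bundles on $N$. I expect this to be the main point requiring care, but it is classical and implicit in the setup of Lemma \ref{lem:Mori}; the rest is bookkeeping with Euler characteristics.
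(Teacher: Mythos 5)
Your argument is correct and follows essentially the same route as the paper: take Euler characteristics of the two exact sequences from Lemma \ref{lem:Mori}, use finiteness of $\eta$, apply Serre duality on the Gorenstein curve $N$ to get $\chi(N,\omega_S^{-1}\otimes\omega_N)=-\chi(N,\omega_S|_N)$, and use $\omega_S\equiv 0$ to identify this with $-\chi(N,\sO_N)$. The paper's proof is just a terser version of the same computation.
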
 

\begin{proof} The first equation in (a) follows from Lemma \ref{lem:Mori}. As to the second equation in (a), observe by Serre duality
$$ \chi(N,\omega_S^{-1} \otimes \omega_N) = - \chi(N,{\omega_S}_{\vert N})  = - \chi(N,\sO_N),$$
since $\omega_S \equiv 0$. For the same reasons
$$ \chi(\tilde N,\sO_{\tilde N}) = \chi(N,\sO_N) + \chi(N,\omega_S^{-1} \otimes \omega_N) = 0.$$
\end{proof}

 \begin{proposition} \label{cor:VF}
 Let $X_c$ be a smooth fiber of $f$. Then
 $$ H^0(X_c,{T_X}_{ \vert X_c}) \ne 0,$$

 \end{proposition}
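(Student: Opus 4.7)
The plan is to use the normal sequence of $X_c$ in $X$. Since $X_c$ is a smooth fiber of $f$, the normal bundle is trivial, $N_{X_c/X}\simeq f^\ast T_{C,c}\simeq\sO_{X_c}$, and we have
$$0 \to T_{X_c} \to T_X|_{X_c} \to \sO_{X_c} \to 0.$$
Taking cohomology yields
$$0 \to H^0(X_c,T_{X_c}) \to H^0(X_c,T_X|_{X_c}) \to \mathbb{C} \xrightarrow{\delta} H^1(X_c,T_{X_c}),$$
where $\delta$ is the Kodaira--Spencer class of the family $f$ at $c$. It therefore suffices to prove, for each fiber type given by Lemma \ref{lem:fibers}, that either $H^0(X_c,T_{X_c})\neq 0$ or $\delta = 0$. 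I would treat this case by case.

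For all fiber types except possibly the Inoue surface of hyperbolic type $S_M$, I would exhibit an explicit nontrivial holomorphic vector field on $X_c$. On a torus any translation vector field works; on a primary Hopf surface $(\mathbb{C}^2\setminus\{0\})/\langle\gamma\rangle$ put in Kodaira normal form $\gamma(z_1,z_2)=(\alpha z_1+\lambda z_2^m,\beta z_2)$, the field $z_2\partial_{z_2}$ is $\gamma$-invariant (and in the diagonal case $\lambda=0$ the Euler field works); on a primary Kodaira surface the vertical translation field arising from its structure as a principal elliptic bundle over an elliptic curve is globally defined; on a hyperelliptic surface $(A\times B)/G$ the translation-invariant fields on whichever factor is acted on by $G$ via translations descend; and on an Inoue surface of parabolic type $S^+$ or $S^-$, the $z$-direction field $\partial_z$ on the universal cover $\mathbb{H}\times\mathbb{C}$ is $\Gamma$-invariant, since $\Gamma$ acts in the $z$ variable only by affine translations, and descends to $X_c$. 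In each of these cases $H^0(X_c,T_{X_c})\neq 0$ and we conclude immediately.

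The remaining case is that of an Inoue surface $S_M$ of hyperbolic type, where the generator $g_0$ acts as $(w,z)\mapsto(\alpha w,\beta z)$ with $\beta\neq 1$, and a direct computation on the universal cover shows there are no $\Gamma$-invariant holomorphic vector fields, so $H^0(X_c,T_{X_c})=0$. Here I would prove the stronger statement $H^1(X_c,T_{X_c})=0$, which makes $\delta$ trivially zero. By Hirzebruch--Riemann--Roch on the surface $X_c$, $\chi(T_{X_c})=2\chi(\sO_{X_c})+K_{X_c}^2-c_2(X_c)=0$; combined with $h^0(T_{X_c})=0$ as above and the Serre-dual vanishing $h^2(T_{X_c})=h^0(\Omega^1_{X_c}\otimes\omega_{X_c})=0$, this forces $h^1(T_{X_c})=0$ and the claim follows. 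The main obstacle is verifying the twisted vanishing $h^0(\Omega^1_{X_c}\otimes\omega_{X_c})=0$ in the hyperbolic Inoue case: this should follow from an explicit universal-cover calculation using the non-trivial multiplier system of $\omega_{X_c}\in\mathrm{Pic}^0(X_c)$ under $g_0$, where any potential global section reduces to a pair of holomorphic functions on $\mathbb{H}\times\mathbb{C}$ subject to multiplicative functional equations $(f(\alpha w,\beta z),g(\alpha w,\beta z)) = (\mu\, f,\nu\, g)$ together with periodicity in the translation directions, admitting only the zero solution.
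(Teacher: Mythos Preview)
Your approach is exactly the paper's: use the normal bundle sequence, and for each fiber type either exhibit a nonzero vector field or show $H^1(X_c,T_{X_c})=0$. The paper is terser, simply citing \cite{In74} for the Inoue cases rather than sketching explicit computations.

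There is, however, a factual slip in your case split. On an Inoue surface $S^{(-)}_{N,p,q,r}$ the generator $g_0$ acts on $\mathbb H\times\mathbb C$ by $(w,z)\mapsto(\alpha w,\,-z)$, not by a translation in $z$; so $\partial_z\mapsto -\partial_z$ is \emph{not} $\Gamma$-invariant, and indeed $H^0(S^{(-)},T_{S^{(-)}})=0$ (Inoue \cite{In74}). Thus $S^{(-)}$ belongs with $S_M$ in the ``rigid'' branch, not with $S^{(+)}$. The paper accordingly treats both $S_M$ and $S^{(-)}$ together by quoting $H^1(X_c,T_{X_c})=0$ from \cite{In74}. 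Your Riemann--Roch argument for $S_M$ applies verbatim to $S^{(-)}$ once you know $h^0(T)=0$ there as well, so the repair is immediate; but as written the $S^{(-)}$ case is not covered. (Incidentally, rather than the universal-cover computation you outline for $h^2(T_{X_c})=h^0(\Omega^1_{X_c}\otimes\omega_{X_c})=0$, you could simply cite \cite{In74}, where the full table of $h^i(T)$ for all three Inoue types is computed.)
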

 
 \begin{proof}  Consider the exact sequence
 $$ 0 \to H^0(X_c,T_{X_c}) \to H^0(X_c,{T_X}_ {\vert X_c}) {\buildrel {\kappa} \over {\longrightarrow} } H^0(X_c, N_{X_c/X}) \simeq \mathbb C.$$ 
 If $H^0(X_c,T_{X_c}) \ne 0$, the assertion is clear. So it remains to treat the case that $X_c$ has no vector fields. Then by Lemma \ref{lem:fibers} and \cite{In74}, 
 $X_c$ is an Inoue surface of type $S_M$ or $S_N^{-}$, in which cases $H^1(X_c,T_{X_c}) = 0$. Thus $X_c$ is rigid and $\kappa $ is surjective, so that 
 $H^0(X_c,T_X \vert X_c) \ne 0$ also in these cases.
 
 \end{proof} 
 
 \begin{corollary} \label{cor:VF2} Let $X_c = \lambda S$ be a fiber with $S$ an irreducible singular surface and $\lambda \geq 1$.
 Then there exists a finite \'etale cover $S' \to S$ such that $H^0(S',T_{S'}) \ne 0$.
  \end{corollary}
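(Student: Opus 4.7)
The plan is to combine upper semi-continuity of cohomology in a flat family with a cyclic \'etale cover untwisting the torsion normal bundle $N_{S/X}$. Since $f:X\to C$ is flat and $T_X$ is locally free on the smooth three-fold $X$, the function $c'\mapsto h^0(X_{c'},{T_X}_{\vert X_{c'}})$ is upper semi-continuous on $C$, and by Proposition \ref{cor:VF} it is $\geq 1$ on the dense open set of smooth fibres, hence $\geq 1$ at $c$. Pick a non-zero $v\in H^0(X_c,{T_X}_{\vert X_c})$. Set $L := \sI_S/\sI_S^2 \cong {\sO_X(-S)}_{\vert S}$, a line bundle on $S$ because $S$ is Cartier in the smooth three-fold $X$; the identity $\sO_X(\lambda S) = \sO_X(X_c) \cong f^*\sO_C(c)$ restricts trivially to $X_c$ and yields a canonical $L^{\otimes\lambda}\cong\sO_S$.

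Next, I would filter ${T_X}_{\vert X_c}$ by $\sI_S^j\cdot {T_X}_{\vert X_c}$ with graded pieces $L^{\otimes j}\otimes {T_X}_{\vert S}$, and pick $\mu$ maximal with $v\in H^0(X_c,\sI_S^\mu\cdot {T_X}_{\vert X_c})$; the induced class gives a non-zero $\bar v\in H^0(S,L^{\otimes\mu}\otimes {T_X}_{\vert S})$. Form the $\lambda$-cyclic \'etale cover
$$\pi: S' := {\rm Spec}_S\bigoplus_{j=0}^{\lambda-1}L^{\otimes(-j)}\longrightarrow S,$$
with algebra structure induced by $L^{\otimes\lambda}\cong\sO_S$; on $S'$, $\pi^*L$ is canonically trivial, so $\tilde v := \pi^*\bar v$ is a non-zero global section of $\pi^*({T_X}_{\vert S})$.

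The final and crucial step is to show $\tilde v$ lies in the subsheaf $T_{S'}\subset \pi^*({T_X}_{\vert S})$ defined on the smooth locus of $S'$ by the sequence $0\to T_{S'}\to \pi^*({T_X}_{\vert S})\to \pi^*N_{S/X}\cong\sO_{S'}\to 0$, equivalently that the image of $\tilde v$ in $\sO_{S'}$ vanishes. The natural approach is to exploit Lemma \ref{lem:fibers}: for general fibres of torus, hyperelliptic, primary Kodaira, primary Hopf, or Inoue $S_N^+$ type, the relative tangent sheaf $T_{X/C}$ satisfies $h^0(T_{X_t})\geq 1$ on smooth fibres, so upper semi-continuity applied to $T_{X/C}$ lets us already take $v\in H^0(X_c,{T_{X/C}}_{\vert X_c})$ vertical, automatically killing the normal component; a derivation on the smooth locus of $S'$ then extends to a global section of $T_{S'}$ via the Gorenstein property of $S'$ inherited from $S$ (which is Cartier in smooth $X$). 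The main obstacle will be the rigid Inoue sub-cases $S_M$ and $S_N^-$, where no vertical vector field exists on the general fibre and the only available section of ${T_X}_{\vert X_t}$ is the horizontal one produced by the rigidity argument in the proof of Proposition \ref{cor:VF}. Here one must exploit rigidity of Inoue surfaces to argue that no genuinely singular reduction $S$ can arise --- making the corollary vacuously satisfied --- and this case-by-case analysis, together with a careful verification of the vanishing in the borderline filtration index $\mu = 0$ (in particular when $\lambda = 1$), constitutes the bulk of the technical work.
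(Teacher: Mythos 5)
Your first half---upper semicontinuity of $h^0(X_{c'},{T_X}_{\vert X_{c'}})$ combined with Proposition \ref{cor:VF}, the $\sI_S$-adic filtration producing a non-zero $\bar v\in H^0(S,L^{\otimes\mu}\otimes {T_X}_{\vert S})$ with $L=\sO_X(-S)_{\vert S}$ torsion, and the cyclic \'etale cover trivializing $L$---matches the paper's argument. But you go wrong at exactly the step you call crucial. The paper's point there is much simpler than verticality: the sheaf map $\kappa\colon {T_X}_{\vert S}\to N_{S/X}$ is locally $v\mapsto v(g)$ for a local equation $g$ of $S$, so its image is contained in $\sJ\cdot N_{S/X}$ for an ideal sheaf $\sJ$ cosupported on the (non-empty) singular locus of $S$. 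Hence $\kappa(\bar v)$ is a global section of the torsion line bundle $L^{\otimes\mu}\otimes N_{S/X}$ vanishing on ${\rm Sing}(S)$; since $S$ is compact, irreducible and reduced, a numerically trivial line bundle has no non-zero section vanishing anywhere, so $\kappa(\bar v)=0$ and $\bar v\in H^0(S,T_S\otimes L^{\otimes\mu})$ automatically. No choice of a vertical lift is needed, and the singularity of $S$ is used as an asset rather than an obstacle.

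Your substitute---arranging $v$ to lie in ${T_{X/C}}_{\vert X_c}$---fails in precisely the cases you flag: when the general fibre is an Inoue surface of type $S_M$ or $S_N^-$ there is no vector field on the general fibre, and the only section supplied by Proposition \ref{cor:VF} is the horizontal one coming from rigidity, so the verticality route is closed. Your proposed repair, that ``no genuinely singular reduction $S$ can arise'' in these cases, is not established anywhere and is in fact the wrong way around logically: the paper does not rule out singular irreducible fibres over Inoue-type reductions; it proves only that such an $S$ must be non-normal (Proposition \ref{prop:normal}) and then analyses it in Proposition \ref{prop:1}---and the proof of Proposition \ref{prop:normal} itself invokes Corollary \ref{cor:VF2}, so deriving non-existence of singular $S$ from later structure results would be circular. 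As written, the proposal therefore has a genuine gap at its decisive step, even though the surrounding scaffolding (semicontinuity, filtration, cyclic cover) is sound and agrees with the paper.
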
 
  
  \begin{proof} We consider the tangent sequence
  $$ 0 \to T_S \to {T_X}_{ \vert S}  {\buildrel \kappa \over {\longrightarrow}} N_{S/X}.$$ 
  If $\lambda = 1$, then $N_{S/X} \simeq \sO_S$. However, $\kappa $ is not surjective, since $S$ is singular.
  Hence 
  $$ H^0(S,T_S) = H^0(S,{T_X}_{ \vert S}).$$
  By semicontinuity and Proposition \ref{cor:VF}, $H^0(S,T_X \vert S) \ne 0$ and we conclude. \\
  If $\lambda \geq 2$, arguing in the same way, we obtain a torsion line bundle $\sL$ on $X$ such that 
  $$ H^0(S,T_S \otimes \sL_{\vert S}) \ne 0.$$
  Then we pass to a finite \'etale cover $\tilde S \to S$ to trivialize $\sL_{\vert S}$. 
  
  \end{proof} 
  
  \begin{remark} \label{rem:VF} A vector field $v \in H^0(S,T_S)$ induces canonically a vector field $v_0 \in H^0(S_0,T_{S_0})$. For brevity, we say that $v_0$ comes from $S$. 
  \end{remark}

\begin{proposition} \label{prop:normal}
Let $X_c = \lambda S$ with $S$ singular, irreducible. Then $S$ is non-normal. 
\end{proposition}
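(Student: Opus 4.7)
The plan is to argue by contradiction. If $S$ were normal, its singular locus would consist of finitely many isolated points, each a normal Gorenstein surface singularity (as $S$ is a Cartier divisor in the smooth threefold $X$). I first gather numerical data. By adjunction $\omega_S \simeq K_X|_S \otimes N_{S/X}$; since $H^2(X,\mathbb Z)=0$ forces $c_1(K_X)=0$ in $H^2(X,\mathbb R)$, and since $X_c=\lambda S$ makes $N_{S/X}$ a $\lambda$-torsion line bundle on $S$, the bundle $\omega_S$ is numerically trivial. Combining $\chi(S,\sO_S)=0$ (the Lemma right after Notation \ref{not}) with the Leray spectral sequence for the minimal desingularization $\pi\colon\hat S\to S$ yields
\[
\chi(\sO_{\hat S})=-\sum_{p\in\sing(S)}\dim(R^1\pi_*\sO_{\hat S})_p \;\le\; 0,
\]
with equality iff every singularity of $S$ is rational, i.e.\ du Val (as the singularities are Gorenstein). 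Writing $K_{\hat S}=\pi^*K_S+\sum_i a_iE_i$, all discrepancies $a_i$ vanish in the du Val case, while at least one $a_i<0$ as soon as a singularity is non-canonical.

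I would then split into two cases. In the \emph{du Val case}, $K_{\hat S}\equiv 0$ and $\chi(\sO_{\hat S})=0$, so by Kodaira--Enriques together with the classification of class~VII surfaces \cite{BHPV04}, $\hat S$ is a complex torus, a bielliptic surface, a primary or secondary Kodaira surface, a primary or secondary Hopf surface, or an Inoue surface. None of these surfaces contains any rational curve at all, which contradicts the presence of the $(-2)$-curves produced by the resolution of a nontrivial du Val singularity. In the \emph{non-rational case}, $\chi(\sO_{\hat S})<0$ and $K_{\hat S}$ is not nef; contracting $(-1)$-curves leads to a minimal model $S_0$ with $\chi(\sO_{S_0})<0$. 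The surface classification forces $S_0$ to be a K\"ahler ruled surface over a curve of genus $g\ge 2$, since non-K\"ahler minimal surfaces (type~VII, Hopf, Inoue, primary Kodaira) all satisfy $\chi(\sO)=0$ and K\"ahler minimal surfaces with $\kappa\ge 0$ satisfy $\chi(\sO)\ge 0$. In particular $S$ would then be covered by a one-parameter family of rational curves.

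The main obstacle I foresee lies in the non-rational case. The final contradiction should come from combining the ruled structure of $\hat S$ with the existence of the vector field on a finite \'etale cover $S'\to S$ furnished by Corollary \ref{cor:VF2}, and with the structure of the general smooth fibers of $f$ described in Lemma \ref{lem:fibers}: the one-parameter family of rational curves in $S$ either deforms to nearby smooth fibers — contradicting the non-uniruledness of tori, bielliptic, Hopf, and Inoue surfaces — or it contributes an independent meromorphic function on $X$, violating $a(X)=1$. Making this dichotomy precise, while correctly tracking the torsion twist introduced by the \'etale cover in Corollary \ref{cor:VF2}, is the delicate step.
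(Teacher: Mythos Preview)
Your treatment of the du Val case is essentially the paper's, and it is correct. (The paper is slightly more cautious and also lists non-K\"ahler minimal surfaces with $\kappa=1$ among the candidates for $\hat S$; but since $\chi(\sO_{\hat S})=0$ forces any such surface to be an elliptic quasi-bundle, it carries no rational curve, so your conclusion survives even if one enlarges the list.) Your reduction in the irrational case to a ruled $S_0$ over a curve $B$ of genus $g\ge 2$ is also correct, and in fact more efficient than the paper's: your observation that every non-K\"ahler minimal surface has $\chi(\sO)\ge 0$ makes the paper's separate vector-field argument for non-K\"ahler $S_0$ superfluous.

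The genuine gap is your endgame in the irrational case. The proposed dichotomy---rational curves on $S$ either deform to nearby smooth fibers or produce an extra meromorphic function on $X$---is not a valid alternative: rational curves sitting on a single fiber of $f$ have no reason to deform sideways, and when they do not there is no mechanism by which they raise $a(X)$. You yourself flag this as ``the delicate step'', but in fact no deformation or vector-field argument is needed. The paper finishes with a one-line Euler-characteristic estimate. Since $S$ is normal with an irrational singularity, $\pi$ contracts an irrational curve $C\subset\hat S$; because $\sigma\colon\hat S\to S_0$ only contracts $(-1)$-curves, the image of $C$ in $S_0$ is an irrational curve, which must dominate $B$ (the fibers of the ruling being rational), so the normalization of $C$ has genus at least $g$. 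Hence $h^0(S,R^1\pi_*\sO_{\hat S})\ge g$, and the Leray spectral sequence gives
\[
1-g \;=\; \chi(\sO_{S_0}) \;=\; \chi(\sO_{\hat S}) \;=\; \chi(\sO_S)-h^0(S,R^1\pi_*\sO_{\hat S}) \;\le\; -g,
\]
which is absurd. This replaces your dichotomy entirely.
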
  

\begin{proof} Suppose $S$ normal and let $\pi: \hat S \to S$ be a minimal desingularization and $\sigma: \hat S \to S_0$ a minimal model. \\
(a) Suppose that $S$ has only rational singularities, hence $S$ has only rational double points. 
Then $K_{\hat S} \equiv 0$, in particular $\hat S$ is a minimal surface containing $(-2)$-curves. 
By surface classification, $\hat S$ is either a K3 surface, an Enriques surface, of type VII or non-K\"ahler of Kodaira dimension $\kappa (\hat S) = 1$. The first two cases are impossible since
$$ \chi(\hat S,\sO_{\hat S}) = \chi(S,\sO_S) = 0.$$
If $\hat S$ is of type VII, then it must be a Hopf surface or an Inoue surface, since $K_{\hat S} \equiv 0$, but these surfaces do not contain 
$(-2)$-curves.  If $\kappa (\hat S) = 1$, then, since $\hat S$ has a vector field, it does not any rational curve, see e.g. \cite[Satz 1]{GH90}.

\medskip 
(b) Suppose now that $S$ has at least one irrational singularity. 
Then $\chi(\hat S,\sO_{\hat S}) < \chi(S,\sO_S) = 0, $
hence
$$ h^1(S_0,\sO_{S_0}) =  h^1(\hat S, \sO_{\hat S}) \geq 2.$$ 
Suppose first that $S_0$ is not K\"ahler. By classification, $S_0$ has to be a primary Kodaira
 surface or $\kappa (S_0) = 1$. By Corollary \ref{cor:VF2}, $H^0(S_0,T_{S_0}) \ne 0$ (up to finite \'etale cover).
Choose a non-zero vector field $v_0$ coming from $S$; then $v_0$ does not have zeroes by classification, \cite[Satz 1]{GH90}; note that in case $\kappa (S_0) = 1$, $S_0$ is an elliptic bundle over a curve of 
genus at least $2$. Hence we must have $\hat S = S_0$.
But then $\hat S$ does not contain contractible curves, so that $S$ is smooth, a contradiction. \\
Thus $S_0$ is K\"ahler. Since $K_{\hat S} = \pi^*(K_S) - E$ with $E$ a non-zero effective divisor,  $\kappa (S_0) = -\infty$ 
and $S_0$ is a ruled surface over a curve $B$ of genus $g = g(B) \geq 2$. 
Since $S$ has an irrational singularity, $\pi$ must contract an irrational curve whose normalization necessarily has genus at least $g$.
Thus $$h^0(S,R^1\pi_*(\sO_{\hat S})) \geq g$$
and therefore
$$1-g = \chi(\hat S,\sO_{\hat S}) \leq \chi(S,\sO_S) - g = - g,$$
which is absurd. 
\end{proof}

\section{Kodaira surfaces, hyperelliptic surfaces and tori} 

In this section we consider the case that the general fiber of $f$ is a Kodaira surface, a hyperelliptic surface or a torus. 
We rule out the case of Kodaira and hyperelliptic fibers and show in the torus case that for general line bundles $\sL$ on $X$, the restriction $\sL_{\vert X_c}$ to any fiber is never torsion.

\begin{proposition} \label{prop:localfreeness}  Assume that the general fiber of $f$ is a Kodaira surface or a torus.  Then
$R^jf_*(\sO_X)$ is locally free for all $j$, in fact, $h^j(X_c,\sO_{X_c})$ is independent on $c \in C$.
\end{proposition}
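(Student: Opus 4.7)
My plan is to prove that $h^j(X_c,\sO_{X_c})$ is independent of $c \in C$; local freeness of each $R^jf_*\sO_X$ then follows from Grauert's base change theorem. The morphism $f$ is flat, since $X$ is smooth, $C$ is a smooth curve, and every fiber is a Cartier divisor, hence Cohen--Macaulay and Gorenstein. Therefore $\chi(X_c,\sO_{X_c})$ is locally constant on $C$. On a generic fiber --- a torus or primary Kodaira surface, both with trivial canonical bundle and Hodge numbers $h^{0,0}=1$, $h^{0,1}=2$, $h^{0,2}=1$ --- we compute $\chi=0$, hence $\chi \equiv 0$ on all of $C$. Upper semicontinuity gives $h^0(X_c) \geq 1$ and $h^2(X_c) \geq 1$ for every $c$, and the relation $h^1 = h^0+h^2$ then reduces the problem to showing $h^0(X_c,\sO_{X_c}) = h^2(X_c,\sO_{X_c}) = 1$ on every fiber.

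For $h^2$, Serre duality on the Gorenstein fiber (using $\omega_C^{-1}\vert X_c \simeq \sO_{X_c}$) gives $h^2(X_c,\sO_{X_c}) = h^0(X_c,\omega_X\vert X_c)$. The assumption $H^2(X,\mathbb{Z})=0$ forces $c_1(\omega_X)=0$, and the exponential sequence together with $H^1(X,\mathbb{Z})=H^2(X,\mathbb{Z})=0$ identifies ${\rm Pic}(X)$ with $H^1(X,\sO_X)$ as $\mathbb{C}$-vector spaces; in particular ${\rm Pic}(X)$ is torsion-free and uniquely divisible. On the generic fiber $\omega_X\vert X_c \simeq \sO_{X_c}$, since torus and Kodaira surfaces have trivial canonical bundle. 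I would propagate this triviality to every fiber as follows: the restriction of $[\omega_X] \in H^1(X,\sO_X)$ to $H^1(X_c,\sO_{X_c})$ lands in the integral lattice $H^1(X_c,\mathbb{Z})$ on a dense open in $C$, and unique divisibility of $H^1(X,\sO_X)$ forces this restriction to vanish identically, so $\omega_X\vert X_c \simeq \sO_{X_c}$ for every $c$. Consequently $h^2(X_c,\sO_{X_c}) = h^0(X_c,\sO_{X_c})$.

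For $h^0$, the main difficulty is the possible presence of non-reduced fiber components. Writing $X_c = \sum m_i S_i$, the relation $\sO_X(X_c) = f^*\sO_C(1)$ combined with the unique $m_i$-th root of this class in $H^1(X,\sO_X)$ shows that each normal bundle $N_{S_i/X}$ is trivial in ${\rm Pic}^0(S_i)$. The Cohen--Macaulay filtration of $\sO_{X_c}$ then has all its graded pieces $N_{S_i/X}^{-k}$ isomorphic to $\sO_{S_i}$, and the boundary maps in the associated long exact sequences encode the Atiyah-type classes of the nilpotent thickenings. I plan to use $f_*\sO_X = \sO_C$ (equivalent to $h^0=1$ on the generic fiber) together with the non-triviality of these boundary maps to force $m_i=1$ for all $i$, yielding $h^0(X_c,\sO_{X_c}) = 1$ on every fiber.

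The hardest step is controlling the boundary maps in the filtrations of possibly non-reduced, possibly reducible fibers in the non-Kähler setting --- that is, showing the extension classes of the nilpotent thickenings are non-zero so that $h^0$ cannot jump. Once $h^0 = h^2 = 1$ uniformly, $h^1 = 2$ follows from $\chi = 0$, and Grauert's theorem yields local freeness of each $R^jf_*\sO_X$.
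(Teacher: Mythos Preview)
Your argument breaks down precisely at the non-reduced fibers, and in two places.

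First, the claim that $\omega_X\vert_{X_c}\simeq\sO_{X_c}$ for \emph{every} $c$ does not follow from your divisibility argument, and in fact fails when $X_c$ is non-reduced. Knowing that the image of $[\omega_X]$ in $H^1(X_c,\sO_{X_c})$ vanishes for generic $c$ tells you nothing about special $c$: the restriction maps $H^1(X,\sO_X)\to H^1(X_c,\sO_{X_c})$ vary with $c$, the target jumps in dimension exactly at the bad fibers, and unique divisibility of the \emph{source} puts no constraint on the value at a single point. Concretely, if $X_{c_0}=mS$ with $S$ irreducible, adjunction gives $\omega_X\vert_S=\omega_S\otimes N_{S/X}^{-1}=N_{S/X}^{-1}$, and this is trivial only if the normal bundle is; see the next paragraph. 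The paper never asserts $\omega_X\vert_{X_c}\simeq\sO_{X_c}$ at multiple fibers.

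Second, your claim that each $N_{S_i/X}$ is trivial is not justified. From $m\cdot\sO_X(S)=\sO_X(X_c)=f^*\sO_C(1)$ and torsion-freeness of ${\rm Pic}(X)$ you correctly deduce that $\sO_X(S)$ is the unique $m$-th root in ${\rm Pic}(X)$; but restricting to $S$ only gives $N_{S/X}^m=(f^*\sO_C(1))\vert_S=\sO_S$, i.e.\ $N_{S/X}$ is $m$-torsion in ${\rm Pic}(S)$. Since ${\rm Pic}(S)$ (for $S$ a torus, say) is \emph{not} uniquely divisible, nothing forces this torsion class to be trivial. Consequently the graded pieces of your filtration of $\sO_{X_c}$ need not all be $\sO_S$, and the whole plan to deduce $m_i=1$ from ``non-triviality of boundary maps'' collapses; in any case, proving all fibers reduced is much stronger than what is needed and is not expected to hold.

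The paper avoids both issues. It does not try to trivialise $\omega_X$ fiberwise; instead it uses that $f_*\omega_X=\sL$ is invertible of rank one to write $\omega_X=f^*\sL\otimes\sO_X\big(\sum_i(m_i-1)F_i\big)$. For a non-reduced fiber $X_c$ with $Z=\sum_i(m_i-1)F_i\subset X_c$, this yields an exact sequence
\[
0\;\longrightarrow\;\sO_{X_c}\;\longrightarrow\;\omega_X\vert_{X_c}\;\longrightarrow\;\omega_X\vert_Z\;\longrightarrow\;0,
\]
and one checks directly that the induced map $H^0(X_c,\omega_X\vert_{X_c})\to H^0(Z,\omega_X\vert_Z)$ vanishes, giving $h^0(X_c,\omega_X\vert_{X_c})=1$. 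Combined with $h^0(X_c,\sO_{X_c})=1$ (which the paper takes as known for a fibration with $f_*\sO_X=\sO_C$ over a smooth curve) and $\chi\equiv 0$, this settles all $h^j$ at once. If you want to salvage your approach, the right move is to drop the attempt to trivialise $\omega_X\vert_{X_c}$ and instead exploit the rank-one sheaf $f_*\omega_X$ globally, as the paper does.
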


\begin{proof}  It suffices to show that $h^2(X_c,\sO_{X_c})$ is independent of $c$. Indeed, since $h^0(X_c,\sO_{X_c}) = 1 $ for all $c$ and since
$\chi(X_c,\sO_{X_c})$ is constant, $h^1(X_c,\sO_{X_c})$ does not depend on $c $ as well, and the assertions follow by Grauert's theorem. 
By Serre duality, 
$$H^2(X_c,\sO_{X_c}) = H^0(X_c,\omega_{X_c}) = H^0(X_c,\omega_X \vert X_c).$$
Setting $\sL = f_*(\omega_X)$, a locally free sheaf of rank one, we obtain
$$ \omega_X = f^*(\sL) \otimes \sO_X\Big(\sum_i (m_i -1) F_i\Big),$$
where $F_i$ are the non-reduced fiber components. In particular, ${\omega_X}_{ \vert X_c} = \sO_{X_c} $ for all reduced fibers $X_c$ and therefore 
$$ h^0(X_c,\omega_{X_c}) = 1 $$
for all those $c$. 
So let $X_c$ be a non-reduced fiber and set $Y = {\rm red}(X_c)$. 
We consider the complex subspace 
$$Z = \sum (m_i-1)  F_i $$ of $X_c$ and have an induced exact sequence
$$ 0 \to \sI_{Z/X_c} \otimes {\omega_X}_{\vert X_c} \simeq \sO_{X_c} \to {\omega_X}_{\vert X_c} \to {\omega_X}_{\vert Z} \to 0.$$ 
Applying $f_*$ and observing that 
$$ f_*(\sO_{X_c}) = f_*({\omega_X}_{\vert X_c} ) = \sO_{\{c\}},$$
shows that the restriction map
$$ H^0(X_c , {\omega_X}_{\vert X_c}) \to H^0(Z, {\omega_X}_{\vert Z}) $$
vanishes. 
Since 
$$ \dim  H^0(X_c , {\omega_X}_{\vert X_c}) = \dim H^0(X_c,\sO_{X_c}) = 1,$$
we conclude  $ h^0(X_c,\omega_{X_c}) = 1$.

%Let $m$ be the smallest integer such that the $m$-th infinitesimal neighborhood $Y_m$ of $Y $ in $X_c$ 
%is $X_c$. 
%Then $$ H^0(Y, {\omega_{X_c}}_{\vert Y} \otimes N^{*\mu}_{Y/X_c})  = H^0(Y, {\omega_{X}}_{\vert Y} \otimes N^{*\mu}_{Y/X_c})  = 0$$
%for $0 \leq \mu \leq  m-1$ and 
%$$ h^0(Y, {\omega_{X_c}}_{\vert Y} \otimes N^{*m}_{Y/X_c})  = 1,$$
%so that $ h^0(X_c,\omega_{X_c}) \leq 1,$
%hence  $ h^0(X_c,\omega_{X_c}) = 1$ by semicontinuity. 

\end{proof} 

\begin{corollary} \label{cor:restrict}  Assume that the general smooth fiber of $F$ is a Kodaira surface, a hyperelliptic surface or a torus. 
Then the restriction map
$$r_F: H^1(X,\sO_X) \to H^1(F,\sO_F) $$
is surjective. 
\end{corollary}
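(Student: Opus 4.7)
The plan is to factor $r_F$ through the global sections of the direct image $R^1 f_*\sO_X$ on $\mathbb P^1$ via the Leray spectral sequence and Proposition \ref{prop:localfreeness}, and then deduce surjectivity from a global-generation statement for a coherent sheaf on $\mathbb P^1$.

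First, since $f: X \to C$ has connected fibers, $f_*\sO_X = \sO_C$, and since $C \simeq \mathbb P^1$ has $H^i(C,\sO_C) = 0$ for $i \geq 1$, the five-term exact sequence of the Leray spectral sequence collapses to
$$H^1(X,\sO_X) \;\cong\; H^0(\mathbb P^1, R^1 f_*\sO_X).$$
In the Kodaira and torus cases, Proposition \ref{prop:localfreeness} gives both that $R^1 f_*\sO_X$ is locally free on $\mathbb P^1$ and that the Grauert base-change map $(R^1 f_*\sO_X)\otimes k(c) \to H^1(X_c, \sO_{X_c})$ is an isomorphism. In the hyperelliptic case, the vanishing $h^2(X_c,\sO_{X_c}) = 0$ for the general smooth fiber, together with upper semicontinuity and the constancy of $\chi(X_c,\sO_{X_c}) = 0$, yields the same local freeness and base-change isomorphism by a parallel argument.

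Consequently, $r_F$ factors as
$$H^1(X, \sO_X) \xrightarrow{\sim} H^0(\mathbb P^1, R^1 f_*\sO_X) \xrightarrow{\mathrm{ev}_c} (R^1 f_*\sO_X)\otimes k(c) \xrightarrow{\sim} H^1(F, \sO_F),$$
so the surjectivity of $r_F$ reduces to that of $\mathrm{ev}_c$. By Birkhoff--Grothendieck, $R^1 f_*\sO_X \cong \bigoplus_i \sO_{\mathbb P^1}(a_i)$, and $\mathrm{ev}_c$ is surjective if and only if all $a_i \geq 0$.

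The hard part is establishing this nefness of $R^1 f_*\sO_X$. I expect to verify it via a combination of (i) relative Serre duality, identifying $R^1 f_*\sO_X$ with $(R^1 f_*\omega_{X/C})^\vee$, (ii) the fact that $\omega_F$ is trivial on torus and primary Kodaira fibers and torsion on hyperelliptic ones, together with the resulting well-controlled behavior of $\omega_{X/C}$, and (iii) a rigidity argument exploiting the simply connected rational base $\mathbb P^1$ (period/monodromy considerations forcing the family to be essentially isotrivial), which should force $R^1 f_*\sO_X$ to have no negative summands. In the K\"ahler setting, the required positivity would follow from Fujita--Koll\'ar semi-positivity; the main obstacle in the present non-K\"ahler context is to adapt that positivity argument to the concrete geometry of these fibrations.
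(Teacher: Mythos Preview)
Your reduction via Leray and base change to the global generation of $R^1f_*\sO_X$ on $\mathbb P^1$ is correct, but the proof is incomplete: you do not actually establish that the summands $\sO_{\mathbb P^1}(a_i)$ have $a_i\ge 0$. What you offer is only a wish list --- relative duality, isotriviality over $\mathbb P^1$, an analogue of Fujita--Koll\'ar positivity --- together with the candid admission that the non-K\"ahler setting is an obstacle. None of these steps is carried out, and there is no off-the-shelf semipositivity theorem for $R^1f_*\omega_{X/C}$ when the fibers are non-K\"ahler (primary Kodaira surfaces). So as it stands, the argument has a genuine gap at its decisive point.

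The paper proceeds quite differently and avoids proving nefness directly. For torus or Kodaira fibers it simply invokes \cite[Theorem~3.1]{CDP98}; the only issue with that proof was the local freeness of $R^1f_*\sO_X$, which is now supplied by Proposition~\ref{prop:localfreeness}. For a hyperelliptic fiber $F$ the paper observes that $\dim H^1(F,\sO_F)=1$, so surjectivity of $r_F$ is equivalent to $r_F\ne 0$; since $H^2(X,\mathbb Z)=0$ forces ${\rm Pic}(X)\cong H^1(X,\sO_X)$, the class $\alpha$ corresponding to $\omega_X$ satisfies $r_F(\alpha)\ne 0$ because $\omega_X|_F=\omega_F$ is numerically trivial but not isomorphic to $\sO_F$. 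You missed this elementary shortcut in the hyperelliptic case. Finally, note the logical order in the paper: the nefness statement you are trying to use as an input (the $b_j\ge 0$ of Proposition~\ref{prop:triv}) is in fact \emph{deduced from} Corollary~\ref{cor:restrict}, not the other way around, so your strategy would require a genuinely independent proof of nefness.
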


\begin{proof} Suppose first that $F$ is a Kodaira surface or a torus. Then the assertion is Theorem 3.1 in \cite{CDP98}; 
the proof works since we now know that $R^1f_*(\sO_X)$ is locally free. 
If $F$ is hyperelliptic, then $H^1(F,\sO_F)$ is one-dimensional, hence it suffices to show that $r_F \ne 0$. 
Let $\mu: H^1(X,\sO_X) \to {\rm Pic}(X)$ be the canonical isomorphism and write $ \mu(\alpha) = \omega_X$. Then $r_F(\alpha) \ne 0$. In fact,
otherwise $\omega_F = \omega_X \vert F = \sO_F$, noticing also that $c_1(\omega_F) = c_1(\omega_X \vert F) = 0$ since $H^2(X,\mathbb Z) = 0$. 
But $\omega_F \not \simeq \sO_F$, a contradition.
\end{proof} 

As a consequence, we obtain 

\begin{corollary} \label{cor:Kodaira}

The general fiber of $f$ cannot be a Kodaira or hyperelliptic surface.
\end{corollary}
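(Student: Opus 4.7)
The plan is to treat the hyperelliptic and primary Kodaira alternatives of Lemma \ref{lem:fibers} separately.

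\textbf{Hyperelliptic case.} This is handled by a purely topological obstruction. By the Bagnera--De Franchis classification of hyperelliptic surfaces (see \cite{BHPV04}, Ch.~V), every hyperelliptic surface $F$ carries a non-trivial torsion subgroup in $H_{1}(F,\mathbb Z)$, of order equal to the order of $\omega_{F}$ (namely $2$, $3$, $4$ or $6$). But Lemma \ref{lemirrdcomp} forces $H_{1}(X_{c},\mathbb Z)$ to be torsion-free for every smooth fiber $X_{c}$ of $f$. Hence the hyperelliptic alternative in Lemma \ref{lem:fibers} is vacuous, and the general fiber cannot be hyperelliptic.

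\textbf{Primary Kodaira case.} Here the idea is to globalize the Albanese fibration of the general fiber. By Corollary \ref{cor:restrict}, the restriction map $r_{F}:H^{1}(X,\sO_{X})\to H^{1}(F,\sO_{F})\simeq\mathbb C^{2}$ is surjective, and by Proposition \ref{prop:localfreeness} the sheaf $R^{1}f_{*}\sO_{X}$ is locally free of rank $2$ on $C\simeq\mathbb P_{1}$. The general fiber $F$ is a principal elliptic bundle over its Albanese elliptic curve $E={\rm Alb}(F)$. The surjectivity of $r_{F}$, combined with the local freeness of $R^{1}f_{*}\sO_{X}$ and the discreteness of the image of the monodromy-invariant part of $R^{1}f_{*}\mathbb Z$, allows one to assemble the fiberwise Albanese varieties into a relative Albanese $\mathcal E:={\rm Alb}(X/C)\to C$, a $2$-dimensional smooth elliptic surface over $\mathbb P_{1}$ equipped with a canonical zero section. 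The existence of a section forces $\mathcal E$ to be projective, so that $a(\mathcal E)=2$. The induced meromorphic map $X\dashrightarrow\mathcal E$ over $C$ restricts on a general fiber of $f$ to the Albanese map of $F$, hence is non-constant there; this yields $a(X)\geq a(\mathcal E)=2$, contradicting the hypothesis $a(X)=1$.

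The main obstacle is the primary Kodaira case, specifically the construction of the relative Albanese $\mathcal E$ in the presence of possibly singular fibers of $f$, and the verification that $\mathcal E$ is algebraic --- Corollary \ref{cor:restrict} and Proposition \ref{prop:localfreeness} provide the cohomological input, but the geometric globalization of the period lattice over $\mathbb P_{1}$ has to be made precise.
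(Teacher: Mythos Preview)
Your hyperelliptic shortcut works: every bi-elliptic surface carries nonzero torsion in $H_1(F,\mathbb Z)$ (this can be read off the explicit presentations of $\pi_1$ in the Bagnera--De Franchis list), so the qualifier ``with torsion-free first homology'' in Lemma~\ref{lem:fibers} is already vacuous for this class. Your precise claim that the torsion subgroup has order equal to the order of $\omega_F$ is not quite right --- for type $I_a$ one finds torsion $(\mathbb Z/2)^2$ while $\omega_F$ has order $2$ --- but nontriviality is all you need. The paper does not take this route; it keeps hyperelliptic surfaces in play through Corollary~\ref{cor:restrict} and then defers, as for Kodaira surfaces, to \cite[Prop.~3.6]{CDP98}. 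Your argument is shorter here.

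The primary Kodaira argument, by contrast, is only a programme with genuine gaps, as you yourself concede in the last paragraph. Two concrete problems. First, the Albanese of a primary Kodaira surface $F$ is built from $H^0(F,\Omega^1_F)^*\simeq\mathbb C$, not from $H^1(F,\sO_F)\simeq\mathbb C^2$; the rank-two sheaf $R^1f_*\sO_X$ you invoke governs the non-compact two-dimensional group ${\rm Pic}^0(F)$, not the one-dimensional Albanese, so Corollary~\ref{cor:restrict} and Proposition~\ref{prop:localfreeness} are not the cohomological input your construction actually needs. Second, even if a relative Albanese $\mathcal E\to C$ with zero section were in hand, producing a map $X\dashrightarrow\mathcal E$ requires a coherent choice of basepoint in each fiber of $f$, i.e.\ a section of $f$, and none is provided; without one you obtain only a map to an $\mathcal E$-torsor, which need not have a section and hence need not be algebraic. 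The paper sidesteps all of this: its proof simply invokes \cite[Prop.~3.6]{CDP98}, an argument working directly with line bundles and the sheaves $R^jf_*(\tilde\sL)$, for which Corollary~\ref{cor:restrict} supplies exactly the surjectivity that proof requires.
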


\begin{proof} This is Proposition 3.6 in \cite{CDP98}. In the proof  of Proposition 3.6, Theorem 3.1 is used which is now established by Corollary \ref{cor:restrict}. 
Notice that in Step 2 of the proof of Proposition 3.6 in \cite{CDP98}, the local freeness of $R^jf_*(\tilde \sL)$ is used only generically. 
\end{proof}  

\begin{remark} The same arguments also rule out Hopf surfaces of algebraic dimension one. 
\end{remark}

From now - for the remainder of this section - we assume that the general fiber of $f$ is a torus.

\begin{proposition} \label{prop:triv}
$R^1f_*(\sO_X) ) \simeq  \sO_C(b_1)  \oplus \sO_C(b_2)$ with $b_j \geq 0$. 
\end{proposition}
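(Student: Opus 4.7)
The result should follow by assembling Proposition \ref{prop:localfreeness}, Grothendieck's splitting theorem on $\mathbb P_1$, the Leray spectral sequence, and Corollary \ref{cor:restrict}. All the substantive input is already in place.

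First, I would use Proposition \ref{prop:localfreeness} to ensure $R^1f_*\sO_X$ is locally free on $C$. Since the general fiber $X_c$ is a $2$-torus we have $h^1(X_c,\sO_{X_c})=2$, so the rank is $2$. Because $C\simeq\mathbb P_1$, Grothendieck's splitting theorem produces integers $b_1,b_2\in\Z$ with
$$R^1f_*\sO_X\;\simeq\;\sO_C(b_1)\oplus\sO_C(b_2),$$
and the only remaining task is to prove $b_j\ge 0$.

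Next, I would look at the Leray spectral sequence for $f$. Using $f_*\sO_X=\sO_C$ together with the vanishings $H^1(C,\sO_C)=H^2(C,\sO_C)=0$ (which hold because $C\simeq\mathbb P_1$), the low-degree exact sequence collapses to a canonical isomorphism
$$H^1(X,\sO_X)\;\xrightarrow{\ \sim\ }\;H^0\bigl(C,R^1f_*\sO_X\bigr).$$
Combining this with the base-change isomorphism $(R^1f_*\sO_X)_c\otimes\C\simeq H^1(X_c,\sO_{X_c})$ (available thanks to local freeness), the natural restriction map factors as
$$H^1(X,\sO_X)\;\simeq\;H^0\bigl(C,\sO_C(b_1)\oplus\sO_C(b_2)\bigr)\;\longrightarrow\;\C^2\;\simeq\;H^1(X_c,\sO_{X_c}),$$
the second arrow being evaluation of global sections at $c$.

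Finally, I would invoke Corollary \ref{cor:restrict}, which in the torus case asserts that this restriction is surjective for a general smooth fiber $X_c$. Hence the evaluation map on the right above is surjective. For a line bundle $\sO_{\mathbb P_1}(b)$, evaluation at a point is surjective precisely when $b\ge 0$ and is zero otherwise; thus if one of the $b_j$ were negative, global sections would lie in a single summand of $\C^2$, contradicting surjectivity. Therefore $b_1\ge 0$ and $b_2\ge 0$.

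\textbf{Main obstacle.} The only step with content is the surjectivity of the restriction $H^1(X,\sO_X)\to H^1(X_c,\sO_{X_c})$, which is precisely the point already recorded in Corollary \ref{cor:restrict} (itself relying on Proposition \ref{prop:localfreeness} and the arguments of \cite{CDP98}). Everything else in the argument is a formal consequence of Grothendieck's splitting and base change, so no new difficulty arises here.
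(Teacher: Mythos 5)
Your proof is correct and follows essentially the same route as the paper: local freeness from Proposition \ref{prop:localfreeness}, the Grothendieck splitting over $C\simeq\mathbb P_1$, the identification $H^1(X,\sO_X)\simeq H^0(C,R^1f_*\sO_X)$, and generic global generation via the surjectivity in Corollary \ref{cor:restrict}. The paper merely states this more tersely ("generically spanned, hence $b_j\ge 0$"), while you spell out the evaluation-map argument.
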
 

\begin{proof}  By Proposition \ref{prop:localfreeness}, the sheaf $R^1f_*(\sO_X)$ is locally free of rank two.
Write
$$ R^1f_*(\sO_X) = \sO_C(b_1) \oplus \sO_C(b_2).$$
We observe that  $R^1f_*(\sO_X)$ is generically spanned by
Corollary \ref{cor:restrict}, since 
$$ H^1(X,\sO_X) = H^0(C,R^1f_*(\sO_X)).$$ 
Hence $b_j \geq 0$. 

\end{proof} 

\begin{proposition} \label{prop:torsion1} 
For general  $\sL \in {\rm Pic}(X)$, the restriction $\sL_{\vert X_c}$ is non-torsion for all $c \in C$. 
\end{proposition}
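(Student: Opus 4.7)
The plan is to identify ${\rm Pic}(X)$ with a space of global sections of a rank-$2$ bundle on $C$, and then to bound the ``bad set'' of those $\sL$ whose restriction is torsion on some fiber via a Baire category / dimension count argument against the integral local system $R^1f_*\mathbb Z$.

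First, from $H^1(X,\mathbb Z)=H^2(X,\mathbb Z)=0$ and the exponential sequence, ${\rm Pic}(X)\cong H^1(X,\sO_X)$. Since $f$ is the algebraic reduction it has connected fibers, so $f_*\sO_X=\sO_C$; together with $C\cong\mathbb P_1$ and $H^1(C,\sO_C)=0$, Leray yields $H^1(X,\sO_X)\cong H^0(C,R)$ for $R:=R^1f_*(\sO_X)\cong\sO_C(b_1)\oplus\sO_C(b_2)$ (Proposition \ref{prop:triv}), which is globally generated since $b_j\ge 0$. By Proposition \ref{prop:localfreeness}, cohomology and base change apply and give $R\otimes\mathbb C(c)\cong H^1(X_c,\sO_{X_c})$ and a surjective restriction map $r_c:H^0(C,R)\to H^1(X_c,\sO_{X_c})$ for \emph{every} $c\in C$. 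Moreover $c_1(\sL|_{X_c})=0$ since $H^2(X,\mathbb Z)=0$, so $\sL|_{X_c}\in{\rm Pic}^0(X_c)$ for every $c$.

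For $c$ in the cofinite smooth locus $C^0\subset C$, $X_c$ is a torus, so ${\rm Pic}^0(X_c)=H^1(X_c,\sO_{X_c})/\Lambda_c$ with $\Lambda_c:=H^1(X_c,\mathbb Z)$ a rank-$4$ lattice in $\mathbb C^2$; hence $\sL_\alpha|_{X_c}$ is torsion iff $n\cdot r_c(\alpha)\in\Lambda_c$ for some $n\ge 1$. Cover $C^0$ by countably many simply connected opens $U_k$ trivializing $R^1f_*\mathbb Z$ via flat integer sections $\lambda_1^{(k)},\ldots,\lambda_4^{(k)}$. For each $(m,n,k)\in\mathbb Z^4\times\mathbb Z_{>0}\times\mathbb Z_{>0}$, the real-analytic set
$$Z_{m,n,k}=\{(\alpha,c)\in H^0(C,R)\times U_k \,:\, n\cdot r_c(\alpha)=\sum_{i=1}^4 m_i\lambda_i^{(k)}(c)\}$$
is cut out by four real equations in a real $(2(b_1+b_2+2)+2)$-dimensional ambient, so has real dimension $\le 2(b_1+b_2)+2$; its projection to $H^0(C,R)$ has real codimension $\ge 2$. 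The countable union over $(m,n,k)$ is the set of $\alpha$ torsion on some smooth fiber, and is meager by Baire. For each of the finitely many singular $c_i$, the complex-linear map $H^1(X,\sO_X)\to H^1(X_{c_i},\sO_{X_{c_i}})$ is surjective onto the $2$-dimensional target, and the torsion subgroup of ${\rm Pic}^0(X_{c_i})$ is countable (it consists of rational points of the image of the finitely generated abelian group $H^1(X_{c_i},\mathbb Z)$), so its preimage in ${\rm Pic}(X)$ is a countable union of complex codim-$2$ affine subspaces and remains meager. The dense complement of the total meager bad set is precisely the set of $\sL$ for which $\sL|_{X_c}$ is non-torsion for every $c\in C$.

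The main obstacle is the dimension count in the smooth-fiber step: one must verify that summing over the real $2$-dimensional family $c\in C^0$ a locus defined by a discrete integral condition still yields real codimension $\ge 2$ in $H^0(C,R)$. This works precisely because each individual equation ``$n\,r_c(\alpha)=\sum m_i\lambda_i(c)$'' imposes real codimension $4$, against which the extra parameter $c$ contributes only real dimension $2$; global generation of $R$ is essential to ensure that $r_c$ is surjective for every (including singular) $c$, so that the same codimension count survives at the finitely many singular fibers.
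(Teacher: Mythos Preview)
Your argument is correct and follows essentially the same route as the paper: both use that ${\rm Pic}(X)\cong H^1(X,\sO_X)=H^0(C,R^1f_*\sO_X)$, that the evaluation $H^0(C,R)\twoheadrightarrow H^1(X_c,\sO_{X_c})\cong\mathbb C^2$ is surjective for every $c$ by Propositions \ref{prop:localfreeness} and \ref{prop:triv}, and then a dimension count (lattice preimage has real codimension $4$ for fixed $c$, the parameter $c$ contributes real dimension $2$, and one takes a countable union) to conclude that the bad locus is meager. The paper compresses this into a single sentence (``the kernel is discrete plus a linear subspace of codimension $2$; since $\dim C=1$, \dots''), whereas you spell out the Baire argument and treat the finitely many singular fibers separately; your remark that the torsion of ${\rm Pic}^0(X_{c_i})$ is countable because it is $\Lambda_{\mathbb Q}/\Lambda$ for $\Lambda={\rm im}\,H^1(X_{c_i},\mathbb Z)$ is exactly what is needed there. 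One cosmetic point: the inference ``four real equations $\Rightarrow$ real codimension $4$'' is not valid as stated, but your intended fibration argument (fixed $c$ gives an affine subspace of complex codimension $2$, then vary $c$) does give the bound, so the conclusion stands.
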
 
\begin{proof} 
By Proposition \ref{prop:triv}, $h^1(X,\sO_X) \geq 2 $ and the restriction 
$$ H^1(X,\sO_X) \to H^1(X_c,\sO_{X_c}) $$
is surjective for all $c$. Consequently, the kernel of the restriction 
$$ {\rm Pic}(X) = H^1(X,\sO_X) \to {\rm Pic}^{\circ}(X_c) = H^1(X_c,\sO_{X_c}) /H^1(X_c,\mathbb Z) $$
is discrete for all $c$ plus a linear subspace of codimension $2$.  Since $\dim C = 1$, 
it follows that for $\sL \in {\rm Pic}(X)$ general, the restriction $\sL_{ \vert X_c}$ is never trivial and thus also non-torsion. 
\end{proof}

\section{Hopf and Inoue surfaces} 

In this section we assume that the general fiber of $f$ is a Hopf or Inoue surface and show that for general line bundles $\sL$ on $X$, the restriction $\sL_{\vert X_c}$ is never torsion.

\begin{proposition} \label{prop:torsion2}
Assume that the general fiber of $f$ is a Hopf or Inoue surface. Let $\sL \in {\rm Pic}(X)$ be general. Then $\sL_{\vert X_c}$ is non-torsion for all $c \in C$, and the restriction map 
${\rm Pic}(X) \to {\rm Pic}(X_c)$ is surjective for any smooth fiber $X_c$. 
\end{proposition}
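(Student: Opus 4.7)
The strategy mirrors Proposition~\ref{prop:torsion1}, using the canonical bundle $\omega_X$ as the distinguished non-torsion class. As set-up, the exponential sequence together with $H^1(X,\mathbb{Z}) = H^2(X,\mathbb{Z}) = 0$ gives ${\rm Pic}(X) = H^1(X,\sO_X)$. For a smooth fiber $X_c$ (primary Hopf or Inoue), Lemma~\ref{lemirrdcomp} forces $H_1(X_c,\mathbb{Z}) = \mathbb{Z}$, whence the Universal Coefficient Theorem yields $H^1(X_c,\mathbb{Z}) = \mathbb{Z}$ and $H^2(X_c,\mathbb{Z}) = 0$, so ${\rm Pic}(X_c) \cong \mathbb{C}/\mathbb{Z} \cong \mathbb{C}^*$.

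For the surjectivity of ${\rm Pic}(X) \to {\rm Pic}(X_c)$ on a smooth fiber, since $H^1(X_c,\sO_{X_c}) = \mathbb{C}$ is one-dimensional it suffices to exhibit a single line bundle with non-trivial restriction. Adjunction together with the triviality of $\sO_X(X_c)|_{X_c}$ on a fiber gives $\omega_X|_{X_c} = \omega_{X_c}$, and $\omega_{X_c} \ne \sO_{X_c}$ since $p_g(X_c) = 0$ for Hopf and Inoue surfaces. Hence $H^1(X,\sO_X) \to H^1(X_c,\sO_{X_c})$ is non-zero and therefore surjective, and composing with the exponential surjection yields surjectivity of ${\rm Pic}(X) \to {\rm Pic}(X_c)$.

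For the generic non-torsion assertion I upgrade this observation: $\omega_{X_c}$ has \emph{infinite} order in ${\rm Pic}(X_c)$, because $\omega_{X_c}^n \cong \sO_{X_c}$ would force $P_n(X_c) \ge 1$ and hence $\kappa(X_c) \ge 0$, contradicting $\kappa = -\infty$ for Hopf and Inoue. For each $n \ge 1$ set $B_n := \{\sL \in {\rm Pic}(X) : \sL^n|_{X_c} \cong \sO_{X_c} \text{ for some } c \in C\}$, which is the projection to ${\rm Pic}(X)$ of the closed analytic subvariety $\{(\sL,c) : \sL^n|_{X_c} \cong \sO_{X_c}\} \subset {\rm Pic}(X) \times C$; since $C$ is compact, $B_n$ is itself analytic by Remmert's proper mapping theorem. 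Provided $\omega_X \notin B_n$, each $B_n$ is a proper analytic subvariety of ${\rm Pic}(X)$, so $\bigcup_n B_n$ is meagre and the general $\sL$ restricts to a non-torsion bundle on every fiber.

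The principal obstacle is excluding the possibility that $\omega_X \in B_n$ is witnessed by a singular fiber. By Corollary~\ref{lem:irrfibers} every fiber is irreducible, so such a fiber has the form $X_c = \lambda S$ with $S$ non-normal by Proposition~\ref{prop:normal}. Adjunction yields $\omega_X|_S = \omega_S \otimes \sN^{-1}$ with $\sN := \sO_X(S)|_S$ a $\lambda$-torsion line bundle (since $\sN^\lambda = \sO_X(X_c)|_S = \sO_S$), so $\omega_X^n|_{X_c} \cong \sO$ would force $\omega_S$ itself to be a torsion class in ${\rm Pic}(S)$. This is ruled out via the normalisation sequences of Lemma~\ref{lem:Mori} together with Proposition~\ref{prop:coho} and the classification arguments in Proposition~\ref{prop:normal}, which tightly constrain the minimal model of $\hat S$ and rule out $\omega_S$ being torsion.
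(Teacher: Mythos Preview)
Your approach differs from the paper's, and there is a genuine gap at the singular fibers.

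The surjectivity argument and the smooth-fiber part are fine: $\omega_X|_{X_c}=\omega_{X_c}$ has infinite order because Hopf and Inoue surfaces have $\kappa=-\infty$, so the linear map $H^1(X,\sO_X)\to H^1(X_c,\sO_{X_c})\cong\mathbb C$ is nonzero, hence surjective, and the exponential quotient gives surjectivity onto ${\rm Pic}(X_c)$.

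The gap is the singular fibers. You reduce to showing that $\omega_S$ is non-torsion on $S={\rm red}(X_c)$ and then assert this is ``ruled out via the normalisation sequences of Lemma~\ref{lem:Mori} together with Proposition~\ref{prop:coho} and the classification arguments in Proposition~\ref{prop:normal}''. None of those results say anything about torsion of $\omega_S$: Proposition~\ref{prop:normal} only shows $S$ is non-normal, Proposition~\ref{prop:coho} computes Euler characteristics, and Lemma~\ref{lem:Mori} relates $\omega_{\tilde S}$ to $\eta^*\omega_S\otimes\sI_{\tilde N}$. Nothing there excludes $\omega_S^m\cong\sO_S$; establishing that would require a case analysis on the minimal model $S_0$ of $\hat S$ of the kind carried out much later in Proposition~\ref{prop:1}, which is both nontrivial and aimed at a different statement. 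Without it you cannot conclude $\omega_X\notin B_n$, so you do not know $B_n$ is proper, and the Baire argument collapses. A secondary issue is that even when it works your argument produces only a \emph{very general} $\sL$ (complement of a countable union), whereas downstream uses (e.g.\ Corollary~\ref{cor:hopf-inoue}) are cleaner with an explicit one-parameter family.

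The paper bypasses fibers entirely by working with the constant sheaf $\mathbb C^*$. From $H^1(X,\mathbb C^*)=0$ and $H^2(X,\mathbb C^*)$ torsion, the Leray spectral sequence for $f$ gives $H^0(C,R^1f_*(\mathbb C^*))\cong\mathbb C^*$. One then shows that $R^1f_*(\mathbb C^*)$ is globally the constant sheaf $\mathbb C^*$: first over the smooth locus $C_0$ via the same topological computation as in Lemma~\ref{lemirrdcomp}, then over all of $C$ by checking the skyscraper cokernel has no sections. A non-torsion $u\in H^0(C,R^1f_*(\mathbb C^*))\cong\mathbb C^*$ is then a nowhere-$1$ section of a constant $\mathbb C^*$, as is every power $u^m$; the associated line bundle $\sL$ therefore restricts non-torsion to \emph{every} fiber at once. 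This handles smooth and singular fibers uniformly, with no classification and no Baire category.
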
 

\begin{proof} The exact sequence
$$ 0 \to \mathbb Z \to \mathbb C \to \mathbb C^* \to 1 $$
and our assumptions give 
$$ H^1(X,\mathbb C^*) = 0.$$ 
Moreover, $H^2(X,\mathbb C^*)$ is torsion. Consider the canonical morphism
$$\lambda: H^0(C,R^1f_*(\mathbb C^*)) \to H^2(C,\mathbb C^*) \simeq \mathbb C^*.$$
Then by the Leray spectral sequence, $\lambda $ is injective and the cokernel is torsion.
Hence
$$ H^0(C,R^1f_*(\mathbb C^*)) \simeq \mathbb C^*.$$
Choose
$$ 1 \ne u \in H^0(C,R^1f_*(\mathbb C^*)) $$
non-torsion. 
This section defines an inclusion
$$ \iota: \mathbb C^* \to R^1f_*(\mathbb C^*). $$
Let $C_0$ be the smooth locus of $f$ in $C$.
We claim that 
\begin{equation}\label{eq(*)} R^1f_*(\mathbb C^*) \vert C_0 = R^1f_{0*}(\mathbb C^*) \simeq \mathbb C^*. \end{equation} 
Suppose first that Claim (\ref{eq(*)}) holds. Then we conclude as follows. Certainly, $\iota $ is an isomorphism over $C_0$. Thus
we obtain a sequence
$$ 0 \to \mathbb C^* \to R^1f_*(\mathbb C^*) \to Q \to 0$$
where $Q$ is supported on the finite set $C \setminus C_0$. Since 
$H^0(C,R^1f_*(\mathbb C^*)) \simeq \mathbb C^*$ and since $H^1(C,\mathbb C^*) = 0$, it follows $H^0(C,Q) = 0$, hence $Q  = 0$. 
Thus $\iota$ is an isomorphism everywhere and consequently $u$ never takes value one, nor does - by our choice of $u$ - any multiple $u^m$. 
Hence $u$ defines a line bundle $\sL$ such that 
$\sL_{\vert X_c}$ is non-torsion for all $c \in C$. 
\vskip .2cm \noindent
It remains to prove Claim (\ref{eq(*)}). 
As before, set $\Delta = C \setminus C_0$, $A = f^{-1}(\Delta)$ and $X_0 = X \setminus A$.  Then, as in the proof of Lemma \ref{lemirrdcomp},
$$ H^4(A,\mathbb C^*) = H^1(X_0,\mathbb C^*) = H^1(C_0,\mathbb C^*) \oplus H^0(C_0,R^1f_{0*}(\mathbb C^*)).$$
Since $H^4(A,\mathbb C^*) \simeq (\mathbb C^*)^s$, it follows
$$ H^0(C_0,R^1f_{0*}(\mathbb C^*)) \simeq \mathbb C^*.$$ Since 
$R^1f_{0*}(\mathbb C^*)$ is locally constant of rank one, the claim follows. 

\end{proof} 

As a consequence we obtain

\begin{corollary} \label{cor:hopf-inoue} \begin{enumerate} 
\item $R^1f_*(\sO_X) \simeq \sO_C$;
\item $R^2f_*(\sO_X) = 0$;
\item for general $\sL \in {\rm Pic}(X)$ and all $c \in C$, we have $$H^0(X_c, \sL_{\vert X_c}) = H^0(X_c, \sL^*_{\vert X_c}) = 0.$$
%\item ${\omega_X}{\vert X_c}$ is non-torsion for all $c \in C$. 
\end{enumerate}
\end{corollary}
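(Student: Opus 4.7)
The plan is to prove parts (a), (b), (c) using Grauert's base change theorem together with Proposition \ref{prop:torsion2}. By Corollary \ref{lem:irrfibers}, all fibers of $f$ are irreducible in this Hopf/Inoue setting, and any singular fiber has the form $X_c = \lambda S$ with $S$ non-normal, by Proposition \ref{prop:normal}.

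For (b): since $X$ and $C$ are smooth, $f$ is flat and each fiber $X_c$ is a Cartier divisor, hence Gorenstein. Relative Serre duality on $X_c$ gives $H^2(X_c, \sO_{X_c})^* \cong H^0(X_c, {\omega_{X/C}}_{\vert X_c})$. On a smooth Hopf or Inoue fiber, ${\omega_{X_c}}$ is a non-trivial element of ${\rm Pic}^0$, so the right-hand side vanishes. A parallel analysis on singular/non-reduced fibers $X_c = \lambda S$, using that $\omega_X$ is topologically trivial (from $H^2(X,\mathbb Z) = 0$) and the non-normal structure codified in Lemma \ref{lem:Mori}, gives $H^2(X_c, \sO_{X_c}) = 0$ for every $c$. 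Grauert then yields $R^2 f_* \sO_X = 0$.

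For (a): given (b), constancy of $\chi(X_c, \sO_{X_c}) = 0$ on smooth Hopf/Inoue fibers (where $h^0 = h^1 = 1$ and $h^2 = 0$) together with flatness forces $h^1(X_c, \sO_{X_c}) = 1$ for every $c$. Hence $R^1 f_* \sO_X$ is locally free of rank one on $\mathbb P_1$, so $R^1 f_* \sO_X \cong \sO_C(d)$ for some $d$. To pin down $d$ I compute $\chi(X,\sO_X)$: by Hirzebruch--Riemann--Roch, $\chi(X,\sO_X) = \frac{1}{24} c_1(X)c_2(X)$, and $H^4(X,\mathbb Z)$ is torsion by Poincar\'e duality (since $H_2(X,\mathbb Z)$ is torsion because $H^2(X,\mathbb Z) = 0$), so $c_2(X)$ is torsion and $c_1 c_2$ vanishes in $H^6(X,\mathbb Q) = \mathbb Q$. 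Thus $\chi(X, \sO_X) = 0$, and the Leray spectral sequence gives $0 = \chi(\sO_C) - \chi(C, R^1 f_* \sO_X) + 0 = 1 - (d+1)$, yielding $d = 0$ and $R^1 f_* \sO_X \cong \sO_C$.

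For (c): apply Proposition \ref{prop:torsion2} to fix $\sL \in {\rm Pic}(X)$ with $\sL_{\vert X_c}$ non-torsion for every $c$ and with ${\rm Pic}(X) \to {\rm Pic}(X_c)$ surjective for smooth $X_c$. On any Hopf or Inoue surface, the locus of line bundles carrying a non-zero global section is a countable subset of ${\rm Pic}$ cut out by explicit character conditions. Hence the bad set $B = \{(\sL', c) \in {\rm Pic}(X) \times C : H^0(X_c, \sL'_{\vert X_c}) \neq 0\}$ is a countable union of analytic subsets; since $C$ is compact, each projects to an analytic subset of the one-dimensional space ${\rm Pic}(X) = H^1(X,\sO_X) = \mathbb C$, and the non-torsion condition from Proposition \ref{prop:torsion2} forces each such projection to be proper, i.e.\ discrete. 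A countable union of discrete sets cannot fill $\mathbb C$, so a generic $\sL$ satisfies $H^0(X_c, \sL_{\vert X_c}) = 0$ for all smooth $c$; the symmetric argument applies to $\sL^*$. Singular fibers $X_c = \lambda S$ are handled by pulling back to the normalization $\tilde S$ via $\eta$ and applying Lemma \ref{lem:Mori} to reduce to the analysis on $\tilde S$ (or its minimal desingularization $\hat S$), where the smooth-fiber argument applies.

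The main obstacle is the uniform vanishing $h^2(X_c, \sO_{X_c}) = 0$ on singular or non-reduced fibers in (b): relative Serre duality leaves us to verify that $({\omega_{X/C}})_{\vert X_c}$ has no sections even when $X_c$ is non-normal or non-reduced, which must be extracted from the topological triviality of $\omega_X$ combined with the detailed non-normal structure of $X_c = \lambda S$ provided by Lemma \ref{lem:Mori}.
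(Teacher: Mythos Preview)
Your approach inverts the paper's order: you try to prove (b) first by computing $H^2(X_c,\sO_{X_c})$ directly on every fiber, and then deduce (a). This is exactly where the argument breaks. For the singular and non-reduced fibers you say only that the vanishing ``must be extracted from the topological triviality of $\omega_X$ combined with the detailed non-normal structure of $X_c=\lambda S$ provided by Lemma~\ref{lem:Mori}'', and you explicitly call this ``the main obstacle'' --- but you never carry it out. There is no short argument here: the minimal model $S_0$ of a component of a singular fiber can be ruled over a curve of higher genus or a class~VII surface with curves (cf.\ the case analysis in Proposition~\ref{prop:1}), and showing $H^0(X_c,\omega_{X_c})=0$ in such cases is not a consequence of topological triviality alone. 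Your deduction of (a) from (b) has a second hidden gap: to get $h^1(X_c,\sO_{X_c})=1$ from $\chi=0$ and $h^2=0$ you also need $h^0(X_c,\sO_{X_c})=1$, which for a non-reduced fiber $X_c=\lambda S$ requires knowing that the torsion normal bundle $N_{S/X}$ has no sections in its nontrivial powers --- again not addressed.

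The paper avoids all of this by proving (a) first and then deducing (b) globally. One writes $R^1f_*\sO_X=\sO_C(a)\oplus T$ with $T$ torsion; Proposition~\ref{prop:torsion2} gives $a=0$, and $T=0$ follows because a torsion class would produce a line bundle $\sM$ nontrivial on a single fiber $X_{c_0}$ but trivial elsewhere, forcing $\sM=f^*f_*\sM\otimes\sO_X(mS)$ with $S=\mathrm{red}(X_{c_0})$ irreducible, hence $\sM_{|X_{c_0}}$ torsion --- contradiction. Part (b) is then purely global: (a) gives $h^1(X,\sO_X)=1$, the general fiber has $\kappa=-\infty$ so $H^0(X,\omega_X)=0$, and $\chi(X,\sO_X)=0$ yields $H^2(X,\sO_X)=0$; Leray then kills the torsion sheaf $R^2f_*\sO_X$. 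Part (c) follows from (b) by semicontinuity: $h^2(X_c,\sO_{X_c})=0$ for all $c$ implies $h^2(X_c,\sL_{|X_c})=0$ for all $c$ once $\sL$ is close to $\sO_X$, and since $\sL\mapsto\omega_X\otimes\sL^*$ permutes generic line bundles, Serre duality converts this into $H^0(X_c,\sL_{|X_c})=0$. No fiberwise analysis of singular fibers is needed, and your countable-union argument for (c) --- which again defers the singular case to an unspecified reduction via Lemma~\ref{lem:Mori} --- is unnecessary.
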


\begin{proof} (a) Since $R^1f_*(\sO_X)$ has rank one, we may write
$$ R^1f_*(\sO_X) \simeq \sO_C(a) \oplus {\rm torsion}.$$
By Proposition \ref{prop:torsion2}, $a = 0$. So it remains to show that $R^1f_*(\sO_X)$ is torsion free. If not, there exists a line bundle $\sM$,
such that $\sM_{\vert X_{c_0}}$ is non-torsion for some $c_0$ but $\sM_{\vert X_c} \simeq \sO_{X_c}$ for $c \ne c_0$. 
Write $ S = {\rm red}(X_{c_0})$. 
Then
$$\sM = f^*f_*(\sM) \otimes \sO_X(D),$$
with an effective divisor $D$ supported on $S$. Since $S$ is irreducible, $D = mS$ and therefore $\sO_X(D)_{\vert X_c}$ is a torsion line bundle,
contradiction. \\
(b) By (a), $h^1(X,\sO_X) = 1$. Since the general fiber of $f$ having negative Kodaira dimension, we have
 $$H^3(X,\sO_X) = H^0(X,\omega_X) = 0.$$
Thus we conclude from $\chi(X,\sO_X) = 0 $, that
$$H^2(X,\sO_X) = 0.$$
Hence, by the Leray spectral sequence, $R^2f_*(\sO_X)$ must be torsion free, therefore
$$ R^2f_*(\sO_X) = 0.$$ 
(c) As a consequence of (b), $R^2f_*(\sL) = 0$ for general $\sL$, hence 
$$ H^2(X_c,\sL_{X_c}) = 0$$
for all $c$. Thus 
$$ H^0(X_c,\sL_{X_c}) = 0$$
for general $\sL$ and all $c$ as well. 
In summary, we may say that 
$$ H^0(X_c,\sL_{\vert X_c}) = H^0(X_c,\sL^*_{\vert X_c})$$
for general $\sL$ and all $c$. \\
%(d) As in the proof of Proposition \ref{prop:torsion2} for $\mathbb Z$ instead of $\mathbb C^*$,  we deduce that 
%$$ R^1f_*(\mathbb Z) \simeq \mathbb Z. $$ 
%The Leray spectral sequence yields
%$$ \mathbb C \simeq H^1(X,\sO_X^*) \simeq H^1(C,\sO_{C}^*) \oplus H^0(C,R^1f_*(\sO_X^*),$$
%hence
%$$  H^0(C,R^1f_*(\sO_X^*) \simeq \mathbb C^*,$$
%and therefore 
%$$ R^1f_*(\sO_X^*) \simeq \sO_C^*.$$
%Redundant: 
%Applying direct images to the exponential sequence on $X$ yields
%$$ 0 \to R^1f_*(\mathbb Z) \to R^1f_*(\sO_X) { \buildrel \kappa \over {\longrightarrow}} R^1f_*(\sO_X^*) \to R^2f_*(\mathbb Z) \to 0,$$ 
%which by our previous considerations reduces to
%$$ 0 \to \mathbb Z \to \sO_C \to \sO_C^* \to R^2f_*(\mathbb Z) \to 0,$$
%and hence the torsion sheaf $R^2f_*(\mathbb Z) = 0$.
%Now $\omega_X^{m} $ defines sections $s_m \in H^0(C,R^1f_*(\sO_X))$ and $t_m \in H^0(C,R^1f_*(\sO_X^*))$ such that $t_m = \kappa (s_m)$. 
%Notice that for $c \in C_0$, the smooth locus of $f$, the bundle $\omega^m_X_{\vert X_c) = \omega^m_{\vert X_c})$ is never trivial. 
%Thus the constant section $s$ does not take integer values on $C_0$, hence on $C$. Thus $t$ never takes value $1$, and our claim follows. \\
Now $\omega_X^{m} $ defines a section $t_m \in H^0(C,R^1f_*(\sO_X^*))$. Notice that for $c \in C_0$, the smooth locus of $f$, the bundle $\omega^m_{X_{\vert X_c}} = \omega^m_{\vert X_c})$ is never trivial and thus
$t_m$ does not take value $1$ on $C_0$. 
Since $R^1f_*(\sO_X^*) \simeq \sO_C^*$, the section never takes
value $1$, hence our claim follows. 
\end{proof} 

We will further need the following basic statement on Hopf and Inoue surfaces. 

\begin{proposition} 

\label{prop:easy}

Let $S$ be a primary Hopf surface. 
Assume that  $$ H^0(S,\Omega^1_S \otimes \sL) \ne 0 $$
for some line bundle $\sL$ on $S$. Then 
$$ H^0(S,\sL) \ne 0.$$
\end{proposition}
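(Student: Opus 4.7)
The plan is to exploit Kodaira's explicit description: a primary Hopf surface is $S = W / \langle \gamma \rangle$, where $W = \mathbb{C}^2 \setminus \{0\}$ and $\gamma$ is a contraction which, in suitable coordinates, takes the normal form
\[
\gamma(z_1, z_2) = (\alpha z_1 + \lambda_0 z_2^m,\; \beta z_2),
\]
with $0 < |\alpha| \le |\beta| < 1$ and $\lambda_0 \ne 0$ only when $\alpha = \beta^m$ (the exceptional case). Since $W$ is simply connected and $\pi_1(S) = \mathbb{Z}$, every line bundle on $S$ is the character bundle $\sL_\nu$ for some $\nu \in \mathbb{C}^*$; using Hartogs to extend across the origin one identifies
\[
H^0(S, \sL_\nu) = \{\,f \in \sO(\mathbb{C}^2) : f \circ \gamma = \nu f\,\},
\]
and similarly sections of $\Omega^1_S \otimes \sL_\nu$ correspond to holomorphic $1$-forms $\omega$ on $\mathbb{C}^2$ with $\gamma^*\omega = \nu\omega$. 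Write $\sL = \sL_\mu$ for the given line bundle.

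The structural input is the short exact sequence
\[
0 \to \sL_\beta^{-1} \to \Omega^1_S \to \sL_\alpha^{-1} \to 0,
\]
derived as follows: the form $dz_2$ is nowhere vanishing on $W$ and satisfies $\gamma^* dz_2 = \beta dz_2$, hence defines an injection $\sL_\beta^{-1} \hookrightarrow \Omega^1_S$; the class $[dz_1]$ in the quotient satisfies $\gamma^*[dz_1] = \alpha [dz_1]$ in both the diagonal and exceptional cases, since the correction $m\lambda_0 z_2^{m-1}\,dz_2$ is killed modulo $dz_2$. Tensoring with $\sL = \sL_\mu$ and taking the long exact sequence in cohomology, the hypothesis $H^0(S, \Omega^1_S \otimes \sL) \ne 0$ forces
\[
H^0(S, \sL_{\mu/\beta}) \ne 0 \quad\text{or}\quad H^0(S, \sL_{\mu/\alpha}) \ne 0.
\]

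In the first case, a nonzero $f$ with $f \circ \gamma = (\mu/\beta) f$ gives the nonzero section $z_2 f \in H^0(S, \sL)$, since $(z_2 f) \circ \gamma = \beta z_2 \cdot (\mu/\beta) f = \mu z_2 f$. In the second case, one multiplies $f$ by an eigenfunction of $\gamma$ with eigenvalue $\alpha$: in the diagonal case this is $z_1$ itself (as $z_1 \circ \gamma = \alpha z_1$), while in the exceptional case it is $z_2^m$ (as $z_2^m \circ \gamma = \beta^m z_2^m = \alpha z_2^m$). Either construction yields a nonzero element of $H^0(S, \sL)$.

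The main obstacle is the exceptional case: there $\Omega^1_S$ is a non-split extension and the coordinate function $z_1$ is no longer a $\gamma$-eigenfunction, so the direct ``multiplication by $z_1$'' trick of the diagonal case fails. The key observation that rescues the argument is that the identity $\alpha = \beta^m$ characterizing the exceptional case is precisely what makes $z_2^m$ an eigenfunction with the correct character $\alpha$, allowing it to substitute for $z_1$ in the second case.
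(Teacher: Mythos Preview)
Your argument is correct. Both your proof and the paper's hinge on the same structural fact --- a two-step filtration of $\Omega^1_S$ by line bundles --- followed by a ``multiplication'' step to pass from the graded pieces back to $\sL$ itself. The difference is in how each step is realised.

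The paper works intrinsically: it picks a global vector field $v$ with (possibly empty) one-dimensional zero locus $C$, obtaining $0 \to \omega_S(C) \to \Omega^1_S \to \sO_S(-C) \to 0$; the multiplication step is then tensoring with a section of $\omega_S^{-1}(-C)$, produced as $v \wedge v'$ for a second vector field $v'$. Your version works extrinsically via Kodaira's normal form on the universal cover, obtaining the filtration from the coordinate differentials $dz_2$, $[dz_1]$ and performing the multiplication literally by the coordinate functions $z_1$, $z_2$, or $z_2^m$. In the diagonal case the two filtrations coincide (take $v = z_2\,\partial_{z_2}$, so $C = \{z_2 = 0\}$), and your multiplication by $z_1$ or $z_2$ is exactly the paper's tensoring by the canonical section of $\sO_S(C)$ or of $\omega_S^{-1}(-C)$. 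The paper's approach has the advantage of being coordinate-free and of not requiring a case split between the diagonal and exceptional normal forms; your approach has the advantage of being completely explicit and of making transparent why the exceptional case works, namely that the resonance condition $\alpha = \beta^m$ furnishes $z_2^m$ as a replacement eigenfunction for $z_1$.
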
 

\begin{proof} Choose a vector field $v$ on $S$ and let $C$ be the zero locus of $v$ which is purely one-dimensional. We obtain an
exact sequence
$$ 0 \to \sO_S(C) \to T_S \to \sO_S(-C) \otimes \omega^{-1}_S \to 0.$$
Dualizing, 
$$0 \to \sO_S(C) \otimes \omega_S  \to \Omega^1_S \otimes \sO_S(-C) \to 0.$$
Hence 
$$ H^0(S,\sO_S(C) \otimes \omega_S \otimes \sL) \ne 0$$
or 
$$H^0(S,\sO_S(-C) \otimes \sL) \ne 0,$$
In the latter case, the claim is clear. In the first case we observe that $$H^0(S,\omega^{-1}_S \otimes \sO_S(-C)) \ne 0.$$
Indeed, there exists another vector field $v'$, and 
$v \wedge v'$ is a section of $\omega^{-1}_S$ vanishing on $C$. 

\end{proof} 

\begin{proposition} \label{prop:easy1}
Let $S$ be an Inoue surface. Then there is a unique line bundle $\sL$ such that
$$ H^0(S,\Omega^1_S \otimes \sL) \ne 0.$$
Moreover, one of the following statements holds. 
\begin{enumerate}
\item either $H^0(S,T_S) \ne 0$ and $\sL = \omega_S^{-1}$
\item  or $H^0(S,T_S) = 0$ and $\sL^{\otimes 2} \simeq \omega^{-1}_S$. 
\end{enumerate} 

\end{proposition}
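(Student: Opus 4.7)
The plan is built around the canonical isomorphism $T_S \simeq \Omega^1_S \otimes \omega_S^{-1}$, valid on any complex surface since for a rank-two bundle $E$ one has $E^\vee \simeq E\otimes (\det E)^{-1}$. Consequently $H^0(S,T_S) = H^0(S,\Omega^1_S\otimes \omega_S^{-1})$, and case (a) follows immediately: if $H^0(T_S)\ne 0$, then $\sL = \omega_S^{-1}$ yields a non-zero section of $\Omega^1_S\otimes \sL$.

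For the existence in case (b), I would use Inoue's explicit presentation of $S$ as $(\mathbb{H}\times \mathbb{C})/\Gamma$ together with the classification into the types $S_M$ and $S_N^-$ for which $H^0(T_S) = 0$. The top form $dw\wedge dz$ on the universal cover transforms under $\Gamma$ by an explicit character $\chi_\omega$ representing $\omega_S$. Examining how $\Gamma$ acts on holomorphic $1$-forms on the cover, one constructs a $\Gamma$-equivariant form $\sigma$ (a specific linear combination of $dw$ and $dz$ tailored to each type) whose character $\chi_\sL$ satisfies $\chi_\sL^{\otimes 2} = \chi_\omega^{-1}$, producing a section of $\Omega^1_S\otimes \sL$ with $\sL^{\otimes 2}\simeq \omega_S^{-1}$.

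For uniqueness, suppose $\sigma \in H^0(\Omega^1_S \otimes \sL)$ and $\sigma' \in H^0(\Omega^1_S \otimes \sL')$ are both non-zero. The wedge $\sigma\wedge\sigma'$ lies in $H^0(\omega_S \otimes \sL \otimes \sL')$. Since Inoue surfaces contain no compact curves, a non-zero section of any line bundle is nowhere vanishing. Hence either $\sigma\wedge\sigma' \equiv 0$, forcing $\sigma$ and $\sigma'$ to be pointwise proportional so that $\sL \simeq \sL'$ (using $H^0(\sO_S) = \mathbb{C}$); or $\sigma\wedge\sigma'$ trivializes $\omega_S\otimes\sL\otimes \sL'$, yielding a splitting $\Omega^1_S \simeq \sL^{-1}\oplus (\sL')^{-1}$. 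The second alternative would be excluded by dualizing to $T_S \simeq \sL\oplus\sL'$ and combining $H^0(T_S) = 0$ in case (b) with the explicit characters of $\Gamma$ to conclude that $\sL$ and $\sL'$ must coincide.

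The hardest step is the case-by-case analysis in (b): both the explicit construction of the square-root section and the exclusion of a non-trivial splitting of $\Omega^1_S$ in the uniqueness argument require careful bookkeeping of the characters of $\Gamma$ for each of Inoue's families, and the analysis for $S_M$ (where $\Omega^1$ on the cover splits diagonally) is the delicate point.
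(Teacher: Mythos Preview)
Your outline is correct in spirit but takes a genuinely different route from the paper. The paper cites Inoue \cite{In74} for existence and then argues cohomologically: in case~(a) the nowhere-vanishing vector field gives $0\to\sO_S\to T_S\to\omega_S^{-1}\to 0$, and dualizing plus $H^0(S,\Omega^1_S)=0$ forces $\sL=\omega_S^{-1}$. In case~(b) the paper starts from the (nowhere-vanishing, since $S$ has no curves) section of $\Omega^1_S\otimes\sL$ to obtain
\[
0\to\sL^{-1}\to\Omega^1_S\to\sL\otimes\omega_S\to 0,
\]
asserts it does not split, so the extension class in $H^1(S,\omega_S^{-1}\otimes\sL^{-2})$ is non-zero, and then invokes \cite[Lemma~1]{In74} (which pins down exactly for which line bundles $H^1$ is non-trivial) to conclude $\sL^{\otimes 2}\simeq\omega_S^{-1}$ or $\sL\simeq\omega_S$, the latter being excluded via $H^2(S,T_S)$.

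Your wedge-product argument is a sound alternative for uniqueness and leads to the same dichotomy (proportional sections versus a splitting of $\Omega^1_S$), but the step you flag as hardest---ruling out a non-trivial splitting by explicit bookkeeping of the $\Gamma$-characters for each Inoue family---is precisely what the paper sidesteps by using Inoue's $H^1$ lemma as a black box. That lemma already encodes the character computation and yields the constraint on $\sL$ in one line, so the paper's approach is shorter and avoids the case-by-case analysis. Conversely, your approach is more self-contained and makes the geometry of the twisted one-form visible; if you want to complete it, the cleanest way to close the gap is simply to observe that non-splitting of $\Omega^1_S$ for $S_M$ and $S_N^-$ follows from $H^1(S,\omega_S^{-1}\otimes\sL^{-2})$ being the only obstruction and then to quote \cite[Lemma~1]{In74}, which brings you back to the paper's argument.
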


\begin{proof}  The existence of $\sL$ is classical, \cite{In74}. \\
If $S$ has a non-zero vector field $v$, necessarily without zeroes, then $v$ induces an exact sequence
$$ 0 \to \sO_S \to T_S \to \omega_S^* \to 0, $$
and the claim is immediate, since $S$ has no curves and since $H^0(S,\Omega^1_S) = 0$. 
If $H^0(S,T_S) = 0$, consider the exact sequence
$$ 0 \to \sL^* \to \Omega^1_S \to \sL \otimes \omega_S \to 0.$$ Since the sequence does not split, 
$$ H^1(S,\omega^{-1}_S \otimes  \sL^{\otimes -2}) \ne 0.$$
Hence either $\omega^{-1}_S \simeq \sO_{\otimes 2}$ or $ \omega_S \simeq \sL$, \cite[Lemma 1]{In74}. 
The second case however cannot happen, since $$H^0(S,\Omega^1_S \otimes \otimes \omega_S \simeq H^2(S,T_S) \ne 0,$$
\cite[Prop.2]{In74}. 

\end{proof}

\section{Proof of Theorem \ref{MT2} }

As already said in the introduction, it suffices to prove Proposition \ref{prop:CDP}. 
Thus we need to show that 
$$ H^2(X_c,{T_X}_{\vert X_c} \otimes \sL_{\vert X_c}) =  0$$
for all $c \in C$. 
By Serre duality, this comes down to show
that 
$$ H^0(X_c, {\Omega^1_X}_{\vert X_c} \otimes \sL_{\vert X_c}) = 0$$
for some $\sL \in {\rm Pic}(X) $ and for all $c \in C$. 

We first consider irreducible fibers. Let 
$$ S = {\rm red}X_c.$$
Using the (co-)tangent sheaf sequence
$$ 0 \to N^*_S \to {\Omega^1_X}_{\vert  S} \to  \Omega^1_{S} \to 0 $$
it is immediate that it suffices to show - provided $\sL_{\vert S}$ is non-torsion -  the following statement
\begin{equation} \label{eq:main1}  H^0(S,\tilde \Omega^1_S \otimes \sL_{\vert S}) = 0, \end{equation} 
where $$ \tilde \Omega^1_S = \Omega^1_S/{\rm torsion}.$$

We first treat smooth fibers $S = X_c$.

\begin{proposition}  Equations (\ref{eq:main1})  holds for smooth fibers $S$ (independent on the structure of the general fiber), i.e., for  $\sL \in {\rm Pic}(X) $ general 
$$  H^0(S, \Omega^1_S \otimes \sL_{\vert S}) =  0 $$
simultaneously for all smooth fibers $S$. 
\end{proposition}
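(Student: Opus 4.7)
The plan is to handle the three possible types of a smooth fiber $S = X_c$ (complex torus, primary Hopf surface, or Inoue surface) provided by Lemma \ref{lem:fibers} and Corollary \ref{cor:Kodaira} separately. A key structural fact I will invoke throughout is that $H^1(X,\mathbb Z) = H^2(X,\mathbb Z) = 0$ together with the exponential sequence forces $\pic(X) = H^1(X,\sO_X)$ to be a complex vector space, in particular uniquely divisible, so that every class admits a square root. I also use that for general $\sL \in \pic(X)$ the restriction $\sL|_{X_c}$ lies in $\pic^0(X_c)$ and is non-torsion for every $c \in C$, by Propositions \ref{prop:torsion1} and \ref{prop:torsion2}.

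For a torus fiber $S$, the cotangent bundle $\Omega^1_S$ is trivial, so $H^0(S,\Omega^1_S \otimes \sL|_S) \simeq H^0(S,\sL|_S)^{\oplus 2}$, which vanishes since a non-trivial topologically trivial line bundle on a complex torus has no sections. For a primary Hopf fiber, Proposition \ref{prop:easy} reduces the vanishing of $H^0(S,\Omega^1_S \otimes \sL|_S)$ to the vanishing of $H^0(S,\sL|_S)$, which is exactly Corollary \ref{cor:hopf-inoue}(c).

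The Inoue case will be the main obstacle. By Proposition \ref{prop:easy1}, on each Inoue fiber $S$ there is a unique line bundle $\sL_0(S) \in \pic(S)$ witnessing non-vanishing, characterized either by $\sL_0(S) \simeq \omega_S^{-1}$ (vector-field case) or by $\sL_0(S)^{\otimes 2} \simeq \omega_S^{-1}$ (no vector fields). My strategy is to produce a single global $\sM \in \pic(X)$ with $\sM^{\otimes 2} \simeq \omega_X^{-1}$, using the unique divisibility of $\pic(X)$, so that $\sM|_{X_c}$ is one of the two square roots of $\omega_{X_c}^{-1} = \omega_X^{-1}|_{X_c}$ inside $\pic^0(X_c) \simeq \mathbb C^*$ for every smooth fiber $X_c$. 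Since these two square roots differ by the unique $2$-torsion element of $\pic^0(X_c)$, the bad condition $\sL|_{X_c} \simeq \sL_0(X_c)$, after the bijective translation $\sL \mapsto \sL \otimes \sM^{-1}$ (which preserves the notion of ``general''), becomes the condition that $(\sL \otimes \sM^{-1})|_{X_c}$ is $2$-torsion. Invoking Proposition \ref{prop:torsion2} on the translated line bundle rules this out simultaneously for all $c$; the vector-field case reduces identically with the direct choice $\sM = \omega_X^{-1}$.

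The hardest point is producing the global $\sM$ and verifying that a single square root choice on $X$ works uniformly across all smooth Inoue fibers. This hinges precisely on the unique divisibility of $\pic(X)$, itself a consequence of $H^1(X,\mathbb Z) = H^2(X,\mathbb Z) = 0$, together with the observation that $\pic^0(X_c) \simeq \mathbb C^*$ contains exactly two square roots of any element, both accounted for by the single $2$-torsion ambiguity and therefore absorbed by the genericity of $\sL$.
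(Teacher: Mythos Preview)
Your argument is correct and follows essentially the same route as the paper's proof. The torus and Hopf cases are identical. For the Inoue cases the paper simply observes that for general $\sL$ the line bundles $\sL\otimes\omega_X$ (vector-field case) and $\sL^{\otimes 2}\otimes\omega_X$ (no vector field) are again general, and then invokes Proposition~\ref{prop:torsion2}; your construction of an explicit global square root $\sM$ of $\omega_X^{-1}$ via the unique divisibility of ${\rm Pic}(X)=H^1(X,\sO_X)$ is just the affine change of variables that makes this observation explicit. Both versions rest on the same fact, namely that multiplication by $2$ is bijective on the vector space ${\rm Pic}(X)$, so the two presentations are equivalent.
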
 

\begin{proof}
%Since ${\rm Pic}(X) \to {\rm Pic}^{\circ}(S)$ is surjective, we only to consider line bundles $\sL$ on $S$. 
%Set $\sM = \sL_{\vert S}$, hence $\sM \in {\rm Pic}^{\circ}(S)$ is non-torsion and $H^0(S,\sM) = H^0(S,\sM^*) =  0$. \\
(a) First, if $S$ is a torus, then 
$$ H^0(S,T_S \otimes \sA)  = H^0(S,\Omega^1_S \otimes \sA) = 0$$
for all non-trivial $\sA$, hence we may take any $\sL$ such that $\sL_{\vert X_c}$ is never trivial, Proposition \ref{prop:torsion1}. \\
%(b) If $S$ is a Hopf surface with $a(S) = 1$, then there is an elliptic fiber bundle $p: S \to B \simeq \mathbb P_1$,
%leading to an exact sequence
%$$ 0 \to p^*(\sO_B(-2)) \to \Omega^1_S \to \omega_{S/B} \simeq \sO_S \to 0.$$ 
%If $\sM \ne p^*(\sO_C(a)$ with $a \geq 0$, 
%then clearly there cannot be an non-zero map $\sM^* \to \Omega^1_S$. 
%The $T_S$-case is similar, dualizing the cotangent sequence. \\
%(c) Suppose next that $S$ is a (primary) Hopf surface with $a(S) = 0$.  Consider an inclusion 
%$\sL^* \to T_S$ and a vector field $v \in H^0(T_S$ with zero locus $C$. This leads to a sequence
%$$ 0 \to \sO_S \to T_S \to \sI_C \otimes \omega^*_S \to 0.$$
%The induced maps $\sM^* \to \sI_C \otimes \omega^*_S$ vanishes if $\sM \ne \sO_S$ (by the structure of divisors on Hopf surfaces). 
%Thus we obtain an inclusion $\sM^* \to O_S$ so that $\sM$ is of the form $\sO_S(D)$ with $D$ an effective divisor, which is forbidden.  \\
%(d) Finally, let $S$ be an Inoue surface. Then there exists a topologically trivial line bundle $\sA$ such that $H^0(S,T_S \otimes \sA)$. 
%Note that $S$ does not contain any curve. Hence, if $\sM^* \to T_S$ is non-zero, then either $\sM^* \simeq \omega^*_S \otimes \sA^*$
%or $\sM^* \simeq \sA^*$. 
(b) If $S$ is a Hopf surface, then by Corollary  \ref{cor:hopf-inoue}, $H^0(S,\sL_{\vert S}) = 0$ for general $\sL$, hence we conclude by
Propositions \ref{prop:torsion2} and \ref{prop:easy}. \\
(c) If $S$ is an Inoue surface with a vector field, then for $\sL$ general, also $\sL^* \otimes \omega_X$ is general, hence 
$$ H^0(S,\sL^*_{\vert S} \otimes (\omega_X )_{\vert S}) = H^0(S,\sL^* \otimes \omega_S)) = 0,$$
hence we conclude by Propositions \ref{prop:torsion2} and \ref{prop:easy1}. \\
(d) Finally, assume that $S$ is an Inoue surface without vector field. Then we argue as in (c), observing that $(\sL^*)^{\otimes 2} \otimes \omega_X$ is general for
general $\sL$. 
\end{proof}

\begin{remark} \label{rem:torsion} 
Since the conormal bundle of a multiple fiber is torsion, the arguments also apply to fibers $X_c = \lambda S $ with $\lambda \geq 2$ and 
$S$ smooth. 
\end{remark} 

\begin{proposition} \label{prop:1}  Equation (\ref{eq:main1}) holds for singular reduced fibers.
\end{proposition}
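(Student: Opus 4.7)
The plan is to transport a hypothetical nonzero section from $S$ through the normalization and minimal desingularization to a section on the minimal model $S_0$, and then derive a contradiction by combining surface classification with the smooth-fiber vanishing arguments already used.

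Concretely, suppose $s \in H^0(S, \tilde \Omega^1_S \otimes \sL_{\vert S})$ is nonzero, where $\sL_{\vert S}$ is non-torsion by Propositions \ref{prop:torsion1} and \ref{prop:torsion2}. As in Notation \ref{not}, set $\varphi = \eta \circ \pi$. The natural map $\varphi^*\Omega^1_S \to \Omega^1_{\hat S}$ is generically an isomorphism away from ${\rm Exc}(\pi) \cup \hat N$, so composing with the pullback of $s$ produces a nonzero section $\hat s \in H^0(\hat S, \Omega^1_{\hat S} \otimes \varphi^*\sL_{\vert S})$. This section vanishes along an effective divisor $\hat D$ supported on $\hat N \cup {\rm Exc}(\pi)$, and so refines to a section of $\Omega^1_{\hat S} \otimes \varphi^*\sL_{\vert S} \otimes \sO_{\hat S}(-\hat D)$. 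Since $S$ is non-normal by Proposition \ref{prop:normal}, $\hat N \neq 0$; Lemma \ref{lem:Mori} combined with $\omega_S \equiv 0$ gives $K_{\hat S} \equiv -\hat N - \hat E$, so $\kappa(\hat S) = -\infty$ and $S_0$ is either minimal ruled or of class VII.

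Following the argument of Proposition \ref{prop:normal} --- using Corollary \ref{cor:VF2} to produce a vector field on (possibly a finite \'etale cover of) $S$, the Euler characteristic identities of Proposition \ref{prop:coho}, and the fact \cite[Satz 1]{GH90} that vector fields on non-ruled minimal surfaces have no isolated zeros --- one narrows $S_0$ down to a primary Hopf or Inoue surface, or to a very restricted ruled configuration matching the admissible general fiber types of Lemma \ref{lem:fibers}. Pushing $\hat s$ forward to $S_0$ then yields a nonzero section of $\Omega^1_{S_0} \otimes \sM$ for a line bundle $\sM$ obtained from $\varphi^*\sL_{\vert S}(-\hat D)$, and the same restriction / semicontinuity mechanism underlying Propositions \ref{prop:torsion1} and \ref{prop:torsion2} shows that $\sM$ inherits non-torsion and genericity from $\sL_{\vert S}$. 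Propositions \ref{prop:easy} and \ref{prop:easy1}, together with Corollary \ref{cor:hopf-inoue}, then force $\sM$ into a codimension-one locus of ${\rm Pic}(S_0)$, contradicting genericity.

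The main obstacle is the bookkeeping of the correction divisor $\hat D$: one has to verify that after descent to $S_0$ the twist $\varphi^*\sL_{\vert S}(-\hat D)$ still inherits the genericity of $\sL$. The identity $K_{\hat S} \equiv -\hat N - \hat E$ is the key input, since it bounds $\hat D$ by the anticanonical bundle of $\hat S$ and therefore pins the descent to $S_0$ inside a fixed coset of ${\rm Pic}(S_0)$ tied to $-K_{S_0}$. A secondary subtlety is that the normalization may behave differently in the various fiber types (torus, Hopf, Inoue), so the final case analysis partially branches according to Corollary \ref{cor:Kodaira} and Lemma \ref{lem:fibers}; but in each case the contradiction is extracted via the smooth-fiber recipe already set up for the reduced smooth case.
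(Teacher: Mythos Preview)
Your outline has a genuine gap in the K\"ahler case, and the mechanism you propose for transferring ``genericity'' from $\sL$ to $\sM$ is not sound.

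First, the claim that $\sM$ (obtained from $\varphi^*\sL_{\vert S}(-\hat D)$ after pushing to $S_0$) inherits non-torsion and genericity from $\sL$ is unjustified and in fact can fail. The correction divisor $\hat D$ depends on the section $s$, hence on $\sL$ itself, so the map $\sL \mapsto \sM$ need not be anything like an isogeny of Picard varieties. The paper works instead with a one-parameter family $\sL_t$ and passes to the induced $\sM_t$, and explicitly warns that \emph{all the $\sM_t$ may be trivial}; the contradiction in the hardest case is therefore not obtained from genericity of $\sM$ at all. Your ``bounding $\hat D$ by $-K_{\hat S}$'' does not help: the support of $\hat D$ lies in $\hat N \cup {\rm Exc}(\pi)$, but nothing controls its multiplicities in terms of $K_{\hat S}$.

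Second, and more seriously, you do not handle the case where $S_0$ is ruled over a curve $B$ of genus $g \geq 2$. There is no reason for $S_0$ to ``match the admissible general fiber types'' of Lemma~\ref{lem:fibers}; the minimal model of the desingularized normalization of a singular fiber is in no way constrained to resemble a smooth fiber. Propositions~\ref{prop:easy} and~\ref{prop:easy1} concern only Hopf and Inoue surfaces and say nothing about ruled surfaces of general type base. In the paper this ruled case is the bulk of the proof: one first shows that $\tilde S$ has only rational singularities (by analyzing what $\pi$ can contract and comparing $h^1$'s), then uses the conductor formula $-K_{S_0} \equiv N_0 + (\text{exceptional part})$ together with the numerical classification of curves on a ruled surface to pin down $N_0 = 2C_0 + R$, then analyzes the induced map $p: S \to B'$ fiberwise (nodal/cuspidal/cycle rational curves) and invokes the conductor-multiplicity result of \cite{KW88} to rule out the generic $2{:}1$ behavior along $\tilde N_1$. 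The final contradiction comes from $h^2(S,\sO_S) \geq 2$ (or the $g = 2$ endgame via relative duality), which is incompatible with Proposition~\ref{prop:triv} or Corollary~\ref{cor:hopf-inoue}. None of this is accessible through the Hopf/Inoue lemmas or through a genericity argument on $\sM$.
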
 

\begin{proof}  Recall the notations \ref{not}.  By Lemma \ref{lem:Mori} and Proposition \ref{prop:normal}, $\kappa (S_0 ) = - \infty$, the surface $S$ is non-normal and 
$$ H^2(\tilde S,\sO_{\tilde S}) = H^0(\tilde S,\omega_{\tilde S}) = 0.$$ 
Arguing by contradiction, there exists a one-dimensional family $\sL_t $ of line bundles on $X$ such that $$H^0(S, \tilde \Omega^1_S \otimes {\sL_t}_{\vert S}) \ne 0.$$
Passing to a desingularization and then to a minimal model $S_0$ as in Notation \ref{not}, 
there are numerically trivial line bundles $\sM_t$ on $S_0$ such that 
$$\sM_t = \sigma_* \pi^*\eta^*(\sL_t)^{**} $$
with a one-dimensional family of sections in 
$$ H^0(S_0,\Omega^1_{S_0} \otimes \sM_t).$$
Thus 
\begin{equation} \label{eq:non}  H^0(S_0,\Omega^1_{S_0} \otimes \sM_t) \ne 0. \end{equation} 
Observe that all line bundles  $\sM_t$ might be trivial. 
\vskip .2cm \noindent {\it Step 1.} 
Suppose first that $S_0$ is K\"ahler. Then by (\ref{eq:non}),  
$S_0$ must be ruled over a curve of genus at least two. 

{\it Claim.  $\tilde S$ has rational singularities, only.}

{\it Proof of the Claim.} Assume to the contrary that  $\tilde S$ has an irrational singularity. We claim that $H^1(\tilde S,\sO_{\tilde S}) = 0$. In fact, $\pi$ must contract a curve  $B_0$
projecting onto $B$. Thus $h^1(B_0,\sO_{B_0}) \geq g$ and therefore $h^0(\tilde S, R^1\pi_*(\sO_{\hat S})) \geq g$. Since $H^2(\tilde S,\omega_{\tilde S}) = 0$, the Leray spectral sequence
yields $H^1(\tilde S,\sO_{\tilde S}) = 0$ (and $h^0(\tilde S, R^1\pi_*(\sO_{\hat S})) = g$).

Thus all line bundles $\eta^*(\sL_t) $ are trivial
and we obtain a one-dimensional family $\tilde \omega_t$ of holomorphic one-forms on $\tilde S$. Moreover there exists a one-dimensional family $\omega_t$  of one-forms
on $B$ such that
$$ \sigma^* p^*(\omega_t) = \pi^*(\tilde \omega_t),$$
where $p: S_0 \to B$ is the ruling. 
Since $p(\sigma(B_0)) = B$, we have $\iota_{B_0}^*\sigma^* p^*(\omega_t)\ne 0$.
 On the other hand, since $\pi$ contracts $B_0$, it follows that $\iota_{B_0}^*(\pi^*(\omega_t)) = 0$, a contradiction.  This proves the {\it Claim} and thus $\tilde S$ has rational singularities, only.

In this case the morphism $p_0: S_0 \to B$ induced a morphism $\tilde p: \tilde S \to B$. 
In the language of divisors and using the notations of (\ref{not}) and (\ref{lem:Mori}) we have
$$ -K_{\hat S} \equiv \hat N + \hat E,$$
where $\hat N $ is the strict transform of $\tilde N$ in $\hat S$. 
Set 
$$ N_0 = \sigma_*(\hat N).$$
We are now using the theory of ruled surfaces as in \cite[section V.2]{Ha77}, taking over also the notations from \cite{Ha77}. 
In particular we have the invariant $e$ and a section $C_0$ with minimal self-intersection $C_0^2 = -e$. 
Moreover,
$$ -K_{S_0} \equiv 2C_0+ (e+2-2g)F,$$
where $F$ is a fiber of $p_0$ and $g = g(B) $ the genus of  $B$. 
Since $\tilde S$ has rational singularities, $\pi$ cannot contract any curve projecting onto $B$. Hence we must have
$$ N \equiv 2C_0 + aF$$ 
with $a \leq e+2-2g$.
Taking into account the numerical description of irreducible curves in $S_0$, as given in  \cite[section V.2]{Ha77}, it follows immediately that 
$e > 0 $ and that 
$$ N = 2C_0 +R$$
with an effective divisor $R \sim aF$ (note that the curve $C_0$ is the unique contractible curve in $S_0$). 
Consequently, $\tilde N$ has a unique component, say $\tilde N_1$, projecting onto $B$, and this component has multiplicity two. 
The map $\tilde p: \tilde S \to B$ induces a holomorphic map $p: S \to B'$ and a commutative diagram 
$$
\xymatrix{ 
\tilde S \ar[d]_{\tilde p} \ar[r]^{\eta} & S \ar[d]^{p}\\
B\ar[r]_{\tau} & B'.}
$$

The general fiber $S_b$ of $p$ is a reduced Gorenstein curve with 
$$ \omega_{S_b} \equiv 0$$ 
whose normalization of $S_b$ is a disjoint union of  smooth rational curves. Thus, if $S_b$ is irreducible, then $S_b$ is a rational curve with one
node or cusp, and if $S_b$ is reducible, it is a cycle of smooth rational curves. In case $S_b$ has a node or is a cycle, the normalization map 
$\eta$ is generically $2:1$ along $\tilde N_1$. In these cases however, $\tilde N_1$ would be reduced, see \cite{KW88}, a contradiction.  In the remaining case, 
$\eta$ has degree one along $\tilde N_1$, hence $\tau $ has degree one, too. Unless $\tau $ is an isomorphism and $g(B) = 2$, we have 
$h^1(B',\sO_{B'}) \geq 3$, hence $h^1(S,\sO_S) \geq 3$. Since $\chi(S,\sO_S) = 0$, we conclude
$$ h^0(S,\omega_S) = h^2(S,\sO_S) \geq 2.$$ 
Since $S$ is Moishezon and $\omega_S \equiv 0$, this is impossible. 
Alternatively, apply Proposition \ref{prop:triv}  or Corollary \ref{cor:hopf-inoue}, respectively.

Hence $\tau $ is biholomorphic, i.e., $p$ maps to the smooth curve $B$ of genus two and $h^1(S,\sO_S) = h^1(B,\sO_B) = 2$. 
Moreover, $h^0(S,\omega_S) = 1$, and therefore $\omega_S \simeq \sO_S$. 
The map $p$ being flat, $R^1p_*(\sO_S) $ is locally free of rank one, and by relative duality, 
$$ R^1p_*(\sO_S) \simeq p_*(\omega_{S/B})^* = \omega_B,$$
hence $$H^0(B,R^1p_*(\sO_S)) \ne 0.$$
But then $h^1(S,\sO_S) > h^1(B,\sO_B)$, a contradiction. This shows that $g(B) \geq 2$ is impossible and concludes the proof in the K\"ahler case.

\vskip .2cm \noindent  {\it Step 2.} 
We thus are reduced to the case that $S_0$ is not K\"ahler. \\
If $S_0$ is of type VII, then $H^0(S_0,\Omega^1_{S_0}) = 0,$ hence $H^0(S_0,\Omega^1_{S_0} \otimes \sM) = 0$ for $\sM$ general, contradicting (\ref{eq:non}).\\
The same argument applies to a secondary Kodaira surface $S_0$. If $S_0$ is a primary Kodaira surface, then the cotangent sequence reads 
$$ 0 \to \sO_{S_0} \to \Omega^1_{S_0} \to \sO_{S_0} \to 0,$$
which immediately gives a contradiction by tensorizing with $\sM_t$. \\
It remains to exclude the case $\kappa(S_0) = 1$. Since $H^0(S_0,T_{S_0}) \ne 0$, (\ref{cor:VF2}) and (\ref{rem:VF}),  the Iitaka fibration $h_0: S_0 \to B$ is an elliptic bundle over a curve of genus $g(B) \geq 2$, \cite[Satz 1]{GH90} and, as already noticed,  
the induced vector field $v_0$ has no zeroes. Hence $\tilde S = \hat S = S_0$. 
Since $\omega_{\tilde S} = \sI_{\tilde N} \otimes \eta^*(\omega_S)$, 
we have
$$ h^2(S,\sO_S) \geq  h^2(\tilde S, \sO_{\tilde S}),$$
hence $h^2(S,\sO_S) \geq 2$, contradicting Proposition \ref{prop:triv}  or Corollary \ref{cor:hopf-inoue}, respectively.

\end{proof} 

\begin{remark}  If the fiber $X_c = \lambda S$ with $S$ an irreducible reduced singular surface and $\lambda \geq 2$, we argue in the same way, passing to a finite \'etale cover. 
\end{remark}

Finally, we have to treat reducible fibers:

\begin{proposition}  Equation (\ref{eq:main1}) holds for reducible fibers.
\end{proposition}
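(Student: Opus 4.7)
The plan is to reduce the statement on the full scheme $X_c = \sum_{i=1}^r m_i S_i$ to a vanishing on each reduced component, using a filtration of the natural scheme structure on $X_c$ by sub-divisors. By Corollary \ref{lem:irrfibers} combined with Corollary \ref{cor:Kodaira}, reducible fibers occur only when the general fiber of $f$ is a torus, so Proposition \ref{prop:torsion1} is available. Fixing an ordering of the components, I would consider the intermediate divisors $D_{k,j} = \sum_{i<k} m_i S_i + j S_k$ for $0 \leq j \leq m_k$. The short exact sequences
\[
0 \longrightarrow \sO_X(-D_{k,j-1})\big/\sO_X(-D_{k,j}) \longrightarrow \sO_{D_{k,j}} \longrightarrow \sO_{D_{k,j-1}} \longrightarrow 0
\]
identify each graded piece with the line bundle $\sO_X(-D_{k,j-1})\vert_{S_k}$ on $S_k$. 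Tensoring with $\Omega^1_X \otimes \sL$ reduces the problem to showing that, for every component $S = S_k$ and every effective divisor $D$ supported on $X_c \setminus S_k$,
\[
H^0(S,\, \Omega^1_X\vert_S \otimes \sL\vert_S \otimes \sO_X(-D)\vert_S) = 0
\]
for $\sL \in {\rm Pic}(X)$ general.

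For each such $S$, the conormal exact sequence $0 \to N^*_{S/X} \to \Omega^1_X\vert_S \to \tilde\Omega^1_S \to 0$ splits the above into the two vanishings
\[
H^0(S,\, N^*_{S/X} \otimes \sL\vert_S \otimes \sO_X(-D)\vert_S) = 0
\qquad \text{and} \qquad
H^0(S,\, \tilde\Omega^1_S \otimes \sL\vert_S \otimes \sO_X(-D)\vert_S) = 0.
\]
For the first one, the factor $N^*_{S/X} \otimes \sO_X(-D)\vert_S = \sO_X(-S-D)\vert_S$ lies in the discrete subgroup of ${\rm Pic}(S)$ generated by the intersection curves with other components and by the relation $\sO_X(X_c)\vert_S \simeq \sO_S$; combined with the non-triviality of $\sL\vert_S$ for general $\sL$ (inherited from Proposition \ref{prop:torsion1} applied to each component), this forces the twisted line bundle to have no sections. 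For the $\tilde\Omega^1_S$ term, I would run the strategy of Proposition \ref{prop:1} on the component $S$: pass to the normalization $\tilde S$, a minimal desingularization $\hat S$, and a minimal model $S_0$, produce a one-parameter family of twisted holomorphic one-forms on $S_0$ with values in numerically trivial line bundles, and exclude each possible $S_0$ by surface classification together with the vector-field analysis of Propositions \ref{prop:easy} and \ref{prop:easy1}.

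The main obstacle is that for a component $S$ of a reducible fiber one no longer has $\omega_S \equiv 0$: by adjunction $\omega_S \simeq \omega_X\vert_S \otimes \sO_X(S)\vert_S$, and while $\omega_{X_c}$ remains numerically trivial on $X_c$, the factor $\sO_X(S)\vert_S$ need not be trivial on $S$ alone. Consequently both Proposition \ref{prop:normal} (non-normality of singular components) and Proposition \ref{prop:1} (classification of minimal models) have to be re-examined for components of reducible fibers, using the global relation $\sum m_i S_i \equiv 0$ and treating the components simultaneously rather than in isolation. Once this refinement of the irreducible-fiber analysis is in place, the filtration reduction above yields the desired vanishing, and Theorem \ref{MT2} follows from Proposition \ref{prop:CDP}.
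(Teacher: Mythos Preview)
Your reduction is correct in spirit but more elaborate than the paper's. The paper does not filter $X_c$ by subdivisors; it argues directly by contradiction: if the vanishing failed for general $\sL$, there would be a one-parameter family $\sL_t \in {\rm Pic}(X)$ with $H^0(F,\Omega^1_X\vert_F \otimes \sL_t\vert_F) \ne 0$, and hence a single fixed component $S_{i_0}$ on which $H^0(S_{i_0}, \tilde\Omega^1_{S_{i_0}} \otimes \sL_t\vert_{S_{i_0}}) \ne 0$ for all $t$. This lands one immediately in the situation of Proposition~\ref{prop:1} on $S_{i_0}$ with the numerically trivial twists $\sL_t\vert_{S_{i_0}}$ --- no extra factors $\sO_X(-D)\vert_S$ to carry along, and no separate conormal vanishing to check.

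The obstacle you correctly flag --- that $\omega_{S_{i_0}}$ is no longer numerically trivial for a component of a reducible fiber, so Propositions~\ref{prop:normal} and~\ref{prop:1} do not literally apply --- is not spelled out in the paper either; it simply says to argue ``as in Proposition~\ref{prop:1}''. But the paper supplies a second device that sidesteps the issue: since $F$ is reducible, choose a neighbouring component $S_{i_1}$ meeting $S_{i_0}$ in a curve and replace $\sL$ by $\sL \otimes \sO_X(-kS_{i_1})$ for $k\gg 0$. This is still a line bundle on $X$, and on $S_{i_0}$ the extra factor $\sO_{S_{i_0}}(-k\,(S_{i_0}\cap S_{i_1}))$ kills any section for large $k$, making the refinement of the minimal-model analysis you propose unnecessary. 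Your observation that reducible fibers occur only in the torus case (via Corollaries~\ref{lem:irrfibers} and~\ref{cor:Kodaira}) is correct and a useful simplification, though the paper's argument does not invoke it.
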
 

\begin{proof} Let $$F = \sum a_iS_i$$ be a reducible fiber.
Arguing by contradiction, there is a one-dimensional family $\sL_t$ of line bundles on $X$ such that 
$$ H^0(F,\Omega^1_X \vert F \otimes (\sL_t)_{\vert F}) \ne 0.$$ 
Hence there exists a number $i_0$ such that 
$$ H^0(S_{i_0}, \tilde \Omega^1_X \vert S_{i_0} \otimes ( \sL_t)_{\vert S_{i_0}}) \ne 0 $$
for all $t$, and therefore 
$$ H^0(S_{i_0}, \tilde \Omega^1_{S_{i_0}} \otimes ( \sL_t)_{\vert S_{i_0}}) \ne 0 $$
Now we argue as in Proposition \ref{prop:1} to obtain a contradiction. One might also use the line 
bundle $\sO_X(-kS_{i_1})$ for $k \gg 0$, where the surface $S_{i_1}$ meets in $S_{i_0}$ in a curve.

\end{proof}

%\bibliographystyle{alpha}

%\bibliography{biblio}

\end{document}